\title{Correspondence of the eigenvalues of a non-self-adjoint operator to those of a self-adjoint operator}
\author{John Weir\\
				Department of Mathematics,
				King's College London,
				Strand,\\
				London WC2R 2LS,
				United Kingdom\\
				john.l.weir@kcl.ac.uk}
\newtheorem{theorem}{Theorem}[section]
\newtheorem{lemma}[theorem]{Lemma}
\newtheorem{proposition}[theorem]{Proposition}
\newtheorem{corollary}[theorem]{Corollary}
\newenvironment{proof}[1][Proof]{\begin{trivlist}
\item[\hskip \labelsep {\bfseries #1}]}{\end{trivlist}}
\newcommand{\qed}{\nobreak \ifvmode \relax \else
      \ifdim\lastskip<1.5em \hskip-\lastskip
      \hskip1.5em plus0em minus0.5em \fi \nobreak
      \vrule height0.75em width0.5em depth0.25em\fi}
\newcommand{\setN}{\mathbf{N}}
\newcommand{\setR}{\mathbf{R}}
\newcommand{\setZ}{\mathbf{Z}}
\newcommand{\setC}{\mathbf{C}}
\newcommand{\eqnref}[1]{(\ref{#1})}
\newcommand{\norm}[1]{\left|\left|#1\right|\right|}
\newcommand{\mod}[1]{\left| #1 \right|}
\newcommand{\dom}[1]{\mathrm{Dom}\left(#1\right)}
\newcommand{\conj}[1]{\overline{#1}}
\newcommand{\ip}[2]{\left< #1, #2 \right>}
\newcommand{\supp}[1]{\mathrm{Supp}\left( #1 \right)}
\newcommand{\dd}[1]{\frac{\mathrm{d}}{\mathrm{d} #1}}
\newcommand{\pdd}[1]{\frac{\partial}{\partial #1}}
\newcommand{\spec}[1]{\mathrm{Spec}\left(#1\right)}
\newcommand{\Ltwow}{\mathrm{L}^2((0,1),w(x)\mathrm{d}x)}
\newcommand{\Ltwo}{\mathrm{L}^2(0,1)}
\newcommand{\Lone}{\mathrm{L}^1(0,1)}
\newcommand{\smooth}{\mathcal{C}^{\infty}([0,1])}
\newcommand{\tnorm}[1]{\left|\left|\left|#1\right|\right|\right|}
\begin{document}
\maketitle

\begin{abstract}
We prove that the eigenvalues of a certain highly non-self-adjoint operator that arises in fluid mechanics correspond, up to scaling by a positive constant, to those of a self-adjoint operator with compact resolvent; hence there are infinitely many real eigenvalues which accumulate only at $\pm \infty$. We use this result to determine the asymptotic distribution of the eigenvalues and to compute some of the eigenvalues numerically. We compare these to earlier calculations in \cite{bobs}, \cite{davies-2007} and \cite{chugunova-2007}.

MSC classes: 34Lxx; 76Rxx; 34B24

Keywords: spectrum, non-self-adjoint, self-adjoint, fluid mechanics, eigenvalue, Sturm-Liouville
\end{abstract}

\section{Introduction}\label{sec:intro}

In a recent paper \cite{weir-2007}, we showed that the spectrum of the highly non-self-adjoint operator $-iH$ is real, where $H$ is the closure of the operator $H_0$ on $L^2(-\pi,\pi)$ defined by
\begin{equation}\label{eq:h0}
	(H_0f)(\theta) = \varepsilon\pdd{\theta}\left(\sin(\theta)\frac{\partial f}{\partial \theta}\right) + \frac{\partial f}{\partial \theta}
	\end{equation}
for any fixed $\varepsilon \in (0,2)$ and all $f \in \dom{H_0} = \mathcal{C}^2_{\mathrm{per}}([-\pi,\pi])$. Boulton, Levitin and Marletta subsequently proved in a recent paper \cite{boulton-2008} that a wider class of operators possess only real eigenvalues. However, they did not prove that any eigenvalues exist for these operators, nor that their spectra are real. The results obtained in this paper for the original operator \eqnref{eq:h0} are much more detailed than those presented in \cite{weir-2007, boulton-2008}.

The operator $H$ was first studied by Benilov, O'Brien and Sazonov, who argued in \cite{bobs} that the equation
\begin{equation}
	\frac{\partial f}{\partial t} = Hf
	\end{equation}
approximates the evolution of a liquid film inside a rotating horizontal cylinder. They also made several conjectures, based on non-rigorous numerical analysis, including that the spectrum of $H$ is purely imaginary and consists of eigenvalues which accumulate at $\pm i \infty$.

Davies showed in \cite{davies-2007} that $-iH$ has compact resolvent by considering the unitarily equivalent operator $A$ on $l^2(\setZ)$ defined by
\begin{equation}
	(Av)_n = \frac{\varepsilon}{2}n(n-1)v_{n-1} - \frac{\varepsilon}{2}n(n+1)v_{n+1}+nv_n
	\end{equation}
for all $v \in \dom{A} = \{v \in l^2(\setZ) : Av \in l^2(\setZ) \}$. Here $A = \mathcal{F}^{-1}(-iH)\mathcal{F}$, where $\mathcal{F}:L^2(-\pi,\pi) \to l^2(\setZ)$ is the Fourier transform. If $\mathcal{F}f=v$ then $(v_n)_{n \in \setZ}$ are the Fourier coefficients of $f$. This result was achieved by obtaining sharp bounds on the rate of decay of eigenvectors and resolvent kernels, and by determining the precise domains of the operators involved.
 He also showed that
\begin{equation}\label{eq:a}
A = A_- \oplus 0 \oplus A_+,
\end{equation}
where $A_-$ and $A_+$ are the restrictions of $A$ to $l^2(\setZ_-)$ and $l^2(\setZ_+)$ respectively, and that $A_-$ is unitarily equivalent to $-A_+$. Since the resolvent is compact and the adjoint has the same eigenvalues, the spectrum of $-iH$ consists entirely of eigenvalues.

As previously mentioned, we proved in \cite{weir-2007} that these eigenvalues, if they exist, must all be real. Eigenvalues of $H$ or $-iH$ have been calculated numerically in \cite{bobs,davies-2007,chugunova-2007}, but until now it has not been proven rigorously that any non-zero eigenvalues exist.

In this paper we prove rigorously that $-iH$ has infinitely many eigenvalues which accumulate at $\pm \infty$ (Corollary \ref{cor:hevs}). Our approach is to show that the eigenvalues of $A_+$ correspond, up to scaling by a positive constant, to those of a self-adjoint operator with compact resolvent (Corollary \ref{cor:correspondence}, Theorem \ref{thm:sa}, Theorem \ref{thm:inverse}) . By analysing the self-adjoint operator, we determine the asymptotic distribution of the eigenvalues (Theorem \ref{thm:muasym}). It was argued in \cite{chugunova-2007} that the distribution of the eigenvalues, if they exist, should be quadratic, but no rigorous bounds were given. We prove rigorously that $\lambda_n \sim \varepsilon \pi^2 n^2 \beta^{-2}$ for some constant $\beta$ which we determine. We also perform numerical calculations of eigenvalues, which we compare to those given in \cite{bobs}, \cite{davies-2007} and \cite{chugunova-2007} (Section \ref{sec:numerics}). Moreover, our calculated values are rigorous upper bounds on the true values of the eigenvalues, insofar as the computed eigenvalues of regular Sturm-Liouville problems, which are known to be computationally stable, can be said to be rigorous. This gives us some idea of the accuracy of the previous calculations.

The correspondence of the eigenvalues of $iH$ to those of a self-adjoint operator $Q$ might lead us to believe that $iH$ is similar to $Q$ in the sense that there exists a bounded linear operator $S$ with bounded inverse such that $S \dom{Q} = \dom{H}$ and $iH = SQS^{-1}$. However, it has recently been proven that $iH$ is not similar in this sense to any self-adjoint operator \cite[Proof of Theorem 5.1]{chugunova-2008}.

\section{Correspondence of eigenvalues to those of a Sturm-Liouville problem}\label{sec:correspondence}

We have already shown in \cite{weir-2007} that if $\lambda$ is an eigenvalue of the operator $A_+$ defined on its natural maximal domain by
\[(A_+v)_n = \frac{\varepsilon}{2}n(n-1)v_{n-1} - \frac{\varepsilon}{2}n(n+1)v_{n+1} + nv_n \]
then $\mu = 2\lambda/\varepsilon$ is an eigenvalue of the Sturm-Liouville problem
\begin{equation}
	-(pu')' = \mu w u,
	\label{eq:sl}
\end{equation}
where
\begin{eqnarray}
	p(x) & = & (1-x)^{1+1/\varepsilon}(x+1)^{1-1/\varepsilon},\\
	w(x) & = & x^{-1}(1-x)^{1/\varepsilon}(x+1)^{-1/\varepsilon}
\end{eqnarray}
and $u \in \mathcal{C}^{\infty}([0,1])$ with $u(0) = 0$. Moreover, the solution of \eqnref{eq:sl} satisfying these conditions is
\begin{equation}\label{eq:udef}
u(x) = \sum_{n=1}^{\infty}v_nx^n,
\end{equation}
where $v_n$ is the solution of the recurrence relation
\begin{equation}
	n(n-1)v_{n-1}-n(n+1)v_{n+1}+2\frac{n-\lambda}{\varepsilon}v_n=0
	\label{eq:recurrence}
\end{equation}
satisfying the initial conditions $v_1 = 1$, $v_2 = (1-\lambda)/\varepsilon$.

We now show the converse:

\begin{theorem}
If $\mu$ is an eigenvalue of the Sturm-Liouville problem \eqnref{eq:sl}, then $\lambda = \varepsilon \mu/2$ is an eigenvalue of $A_+$.
\end{theorem}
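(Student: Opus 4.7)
The plan is to recover an eigenvector of $A_+$ from an eigenfunction $u$ of the Sturm--Liouville problem by reading off its Taylor coefficients at $x=0$. To carry this out I must verify: (i) $u$ admits a Taylor expansion of the form $u(x) = \sum_{n=1}^{\infty} v_n x^n$; (ii) the coefficients satisfy the recurrence \eqnref{eq:recurrence}; (iii) after normalising $v_1 = 1$ I have $v_2 = (1-\lambda)/\varepsilon$; and (iv) $(v_n) \in l^2(\setZ_+)$, which is exactly what identifies $v$ as an element of the maximal domain of $A_+$ with $A_+ v = \lambda v$.

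Items (i)--(iii) are standard Frobenius analysis. The point $x=0$ is a regular singular point of $-(pu')' = \mu w u$ with indicial roots $0$ and $1$. The Frobenius solution attached to $r=0$ is of the form ``constant plus a possible logarithmic correction'' near $0$, and is excluded by $u(0)=0$ together with the local integrability of $|u|^2 w$, where $w(x) \sim x^{-1}$. Hence $u$ is the Frobenius solution attached to $r=1$ and has a convergent Taylor series $u(x) = \sum_{n \geq 1} v_n x^n$; since the only other singular points of the equation lie at $x = \pm 1$, the radius of convergence is at least $1$. Substituting the series into the equation and matching powers of $x$ produces the recurrence \eqnref{eq:recurrence}, and the normalisation $v_1 = 1$ together with its $n = 1$ instance forces $v_2 = (1-\lambda)/\varepsilon$.

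Step (iv) is the main obstacle. The three-term recurrence has two independent asymptotic solutions as $n \to \infty$: one behaving like $n^{1/\varepsilon - 1}$ and an alternating one behaving like $(-1)^n n^{-1 - 1/\varepsilon}$. Since $\varepsilon \in (0,2)$, only the latter is square-summable, so summability of $(v_n)$ is a nontrivial compatibility condition on $\lambda$. I plan to match this condition to the boundary behaviour of $u$ at $x=1$: Frobenius analysis there yields indicial roots $0$ and $-1/\varepsilon$, so one solution of the equation is bounded while the other has a $(1-x)^{-1/\varepsilon}$ singularity, which is excluded by the domain of the self-adjoint Sturm--Liouville operator (failing $L^2(w\,dx)$ integrability near $1$ when $\varepsilon \leq 1$, and failing the selected limit-circle boundary condition when $\varepsilon > 1$). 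The growing recurrence mode $n^{1/\varepsilon - 1}$ should encode precisely the $(1-x)^{-1/\varepsilon}$ contribution to $u$, so excluding that contribution is equivalent to $(v_n) \in l^2(\setZ_+)$.

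The hardest technical step will be making rigorous the identification of the growing mode of the recurrence with the singular Frobenius solution at $x=1$. A clean route is to construct connection coefficients between Frobenius bases at $x=0$ and $x=1$; alternatively one can estimate $u(x) = \sum v_n x^n$ as $x \to 1^-$ by an Abel-type argument that reads the singular part of $u$ off the tail of $(v_n)$, using that the Taylor coefficients of $(1-x)^{-1/\varepsilon}$ grow exactly like $n^{1/\varepsilon - 1}$ by Stirling. Either approach reduces step (iv) to the already imposed boundary condition at $x=1$ for the self-adjoint Sturm--Liouville operator, completing the proof.
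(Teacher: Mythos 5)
Your proposal follows essentially the same route as the paper: identify the eigenfunction with the Frobenius series $\sum_{n\geq 1} v_n x^n$ at $x=0$, whose coefficients satisfy \eqnref{eq:recurrence}, and then rule out $(v_n)\notin l^2(\setZ_+)$ by showing $u$ would blow up at $x=1$ --- your second, Abel-type option is exactly the paper's argument. The one ingredient to make explicit is that the dominant solution of the recurrence is eventually positive (Davies gives $\phi_n \geq n^{1/\varepsilon-1} \geq n^{-1}$ for all large $n$), so no cancellation can occur and $u(x) \geq \mathrm{const} - \log(1-x) \to \infty$ as $x \to 1-$, contradicting the finiteness of $\lim_{x\to 1-}u(x)$.
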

\begin{proof}
If $(v_n)$ is the solution of the recurrence relation \eqnref{eq:recurrence} satisfying the stated initial conditions and $u$ is defined by \eqnref{eq:udef} on $(0,1)$, then $u$ is a non-zero solution of \eqnref{eq:sl}.
Equation \eqnref{eq:sl} is equivalent to
\begin{equation}
	u'' + \left(\frac{1+1/\varepsilon}{z-1}+\frac{1-1/\varepsilon}{z+1}\right)u'-\frac{\mu}{z(z-1)(z+1)}u=0,
	\label{eq:heun}
\end{equation}
so we see that a second linearly independent solution is $u_1 = au(z)\log{z}+\sum_{n=0}^{\infty}b_nz^n$, with $b_0 \neq 0$. Suppose that $\mu$ is an eigenvalue of the Sturm-Liouville problem and $y$ is a corresponding eigenvector. We proved in \cite{weir-2007} that $\mu \in \setR$. Now $y$ is a non-zero solution of \eqnref{eq:sl} in $(0,1)$ such that $\lim_{x\to 0+} y(x) = 0$ and $\lim_{x\to 1-} y(x)$ is finite. Since the space of solutions of \eqnref{eq:sl} is two-dimensional, $y=\alpha u + \beta u_1$ for some $\alpha$, $\beta \in \setC$. Considering the end-point $x=0$, we see that we must have $y=\alpha u$. Without loss of generality, we may assume $y = u$. Hence $u(x)$ converges to a finite limit as $x \to 1-$. Suppose that $\lambda$ is not an eigenvalue of $A_+$. Davies showed in \cite{davies-2007} that, for $\lambda \in \setR$, \eqnref{eq:recurrence} has two linearly independent solutions $\phi$, $\psi$ such that $\phi_n \geq n^{1/\varepsilon-1} \geq n^{-1}$ for all sufficiently large $n$ and $\mod{\psi_n} \sim n^{-1/\varepsilon-1}$ as $n \to \infty$. The space of solutions of \eqnref{eq:recurrence} is two-dimensional so $v_n = a \phi_n + b \psi_n$, and $a \neq 0$ since $\psi \in l^2(\setZ_+)$ and $v \notin l^2(\setZ_+)$. Without loss of generality $a>1$. Hence there exists $N > 0$ such that $v_n \geq n^{-1}$ for all $n \geq N$. For $x \in (0,1)$,
\begin{eqnarray*}
	u(x) & \geq & \sum_{n=1}^{N-1}(v_n - n^{-1}) x^n + \sum_{n=1}^{\infty}n^{-1}x^n\\
	     & = & \sum_{n=1}^{N-1}(v_n - n^{-1}) x^n - \log(1-x)\\
	     & \to & \infty
	\end{eqnarray*}
as $x \to 1-$. This is a contradiction, so $\lambda$ is an eigenvalue of $A_+$.
\qed
\end{proof}

\begin{corollary}\label{cor:correspondence}
$\lambda$ is an eigenvalue of $A_+$ if and only if $\mu = 2\lambda/\varepsilon$ is an eigenvalue of the Sturm-Liouville problem \eqnref{eq:sl}.
\end{corollary}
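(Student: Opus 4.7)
The plan is to observe that this corollary is precisely the conjunction of two implications, one in each direction, both of which are already at our disposal. There is no new mathematical content to extract — the work is in assembling the two halves cleanly.

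First I would note the forward direction: if $\lambda$ is an eigenvalue of $A_+$ with eigenvector $v = (v_n)_{n\geq 1} \in l^2(\setZ_+)$, then (as recalled in the paragraph preceding Theorem 2.1, and as established in \cite{weir-2007}) the sequence $(v_n)$ satisfies the recurrence \eqnref{eq:recurrence} with $v_1 = 1$ and $v_2 = (1-\lambda)/\varepsilon$ after normalisation, and the function $u$ defined by the power series \eqnref{eq:udef} gives a smooth solution of \eqnref{eq:sl} on $[0,1]$ satisfying $u(0) = 0$. Thus $\mu = 2\lambda/\varepsilon$ is an eigenvalue of the Sturm-Liouville problem.

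Conversely, if $\mu$ is an eigenvalue of the Sturm-Liouville problem \eqnref{eq:sl}, then Theorem 2.1 above gives immediately that $\lambda = \varepsilon\mu/2$ is an eigenvalue of $A_+$. Combining the two implications yields the stated equivalence.

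There is no real obstacle here: both directions are already established, and the corollary is essentially a bookkeeping statement. The only thing worth being careful about is the correct statement of the relation $\mu = 2\lambda/\varepsilon$ (equivalently $\lambda = \varepsilon\mu/2$) so that the two directions genuinely compose to give an if-and-only-if, which they do.
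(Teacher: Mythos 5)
Your proposal is correct and matches the paper exactly: the corollary is stated without a separate proof precisely because it is the conjunction of the forward implication recalled from \cite{weir-2007} at the start of Section \ref{sec:correspondence} and the converse established in Theorem 2.1. Your assembly of the two halves, including the check that the scalings $\mu = 2\lambda/\varepsilon$ and $\lambda = \varepsilon\mu/2$ are mutually inverse, is all that is needed.
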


\section{Self-adjointness}

We now show that the operator corresponding to the Sturm-Liouville problem is essentially self-adjoint on a suitable domain. Equation \eqnref{eq:sl} can be written as
\begin{equation}
	Lu = \mu u
	\label{eq:op}
\end{equation}
where $L$ is an operator on $\Ltwow$ defined by
\begin{equation}
	Lf = -w^{-1}(pf')'
	\label{eq:ldef}
\end{equation}
on $\dom{L} = \mathcal{C}_0^{\infty}([0,1]) = \{f \in \smooth : f(0) = 0\} \subset \Ltwow$. We define
\[ \ip{f}{g}_w = \int_0^1 f(x) \conj{g(x)} w(x) \mathrm{d}x \]
and $\norm{f}_w = \ip{f}{f}_w^{1/2}$ for all $f$, $g \in \Ltwow$.

We also consider $L_c$, the restriction of $L$ to $\mathcal{C}_c^{\infty}(0,1)$, which is the space of smooth, compactly supported functions on $(0,1)$.

\begin{proposition}\label{prp:domains}
The adjoints $L_c^*$ of $L_c$ and $L^*$ of $L$ are closed extensions of $L$, which is symmetric, $L^* \subset L_c^*$, and the following are equivalent:

(a) $\mu \in \setC$ is an eigenvalue of the Sturm-Liouville operator $L$ and $u \in \dom{L}$ is a corresponding eigenvector;

(b) $\mu \in \setC$ is an eigenvalue of the Sturm-Liouville operator $\bar{L}$ and $u \in \dom{\bar{L}}$ is a corresponding eigenvector;

(c) $\mu \in \setC$ is an eigenvalue of the operator $L^*$ and $u \in \dom{L^*}$ is a corresponding eigenvector.

Moreover, if $0 < \varepsilon \leq 1$ then statements (a) -- (d) are equivalent to

(d) $\mu \in \setC$ is an eigenvalue of the operator $L_c^*$ and $u \in \dom{L_c^*}$ is a corresponding eigenvector.
\end{proposition}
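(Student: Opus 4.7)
The plan is to establish the routine structural claims by direct manipulation and then identify eigenvectors of $L^*$ and $L_c^*$ with classical smooth solutions via a Frobenius analysis at the two singular endpoints. First I would verify symmetry of $L$ by integrating by parts twice in $\ip{Lf}{g}_w$ for $f,g \in \dom{L}$: the weight cancels, leaving a boundary form $[p(f\bar{g}' - f'\bar{g})]_0^1$ that vanishes at $x=0$ because $f(0)=g(0)=0$ and at $x=1$ because $p(1)=0$. Symmetry gives $L \subset L^*$, hence $\bar{L} = L^{**} \subset L^*$ is a closed symmetric extension and $L^*$ is closed. The inclusion $L_c \subset L$ immediately yields $L^* \subset L_c^*$, so $L_c^*$ is also a closed extension of $L$. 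The implications (a)$\Rightarrow$(b)$\Rightarrow$(c), and (c)$\Rightarrow$(d) where stated, are then clear from $L \subset \bar{L} \subset L^* \subset L_c^*$.

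For the converse implications, take $u \in \dom{L^*}$ (or $\dom{L_c^*}$) with eigenvalue $\mu$. Testing against $f \in \mathcal{C}_c^\infty(0,1)$ shows $u$ satisfies the ODE $-(pu')' = \mu w u$ distributionally on $(0,1)$, and standard ODE regularity then gives $u \in \mathcal{C}^\infty(0,1)$. A Frobenius analysis of the Heun form \eqnref{eq:heun} at each singular endpoint identifies the two independent local branches. At $x=0$ the indicial exponents are $0$ and $1$, with the exponent-$1$ branch equal to the series \eqnref{eq:udef} and the exponent-$0$ branch tending to a nonzero constant (possibly with a logarithmic correction); combined with $w \sim x^{-1}$ this shows the singular branch is not in $L^2_w$ near $0$, so $x=0$ is limit point for every $\varepsilon$ and $L^2_w$-membership alone forces $u$ to be the analytic branch with $u(0)=0$. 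At $x=1$ the exponents are $0$ and $-1/\varepsilon$, with the non-analytic branch blowing up like $(1-x)^{-1/\varepsilon}$; combined with $w \sim (1-x)^{1/\varepsilon}$ the singular branch is in $L^2_w$ near $1$ precisely when $\varepsilon > 1$. Hence for $0 < \varepsilon \leq 1$ both endpoints are limit point, $L^2_w$-integrability alone forces $u$ to be analytic at each end, so $u \in \mathcal{C}^\infty([0,1])$ with $u(0)=0$, i.e.\ $u \in \dom{L}$; the same argument works for $u \in \dom{L_c^*}$, and all of (a)--(d) are equivalent.

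For $1 < \varepsilon < 2$ the endpoint $x=1$ is limit circle, so a generic element of $\dom{L_c^*}$ may carry the singular $(1-x)^{-1/\varepsilon}$ component, which is why (d) is excluded. For $u \in \dom{L^*}$, however, the adjoint defining identity imposes the extra boundary condition that $[p(f\bar{u}' - f'\bar{u})](x) \to 0$ as $x \to 1^-$ for every $f \in \dom{L}$. Using $p \sim (1-x)^{1+1/\varepsilon}$ and the asymptotics of the singular branch gives $pu_{\mathrm{sing}}' \to c \neq 0$ and $p\bar{u}_{\mathrm{sing}} \to 0$, so the limit of the boundary form is a nonzero multiple of $f(1)$ times the coefficient of the singular component; since $f(1)$ is free inside $\dom{L}$, this coefficient must vanish, $u$ is the analytic branch at $x=1$ as well, and $u \in \dom{L}$. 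The main obstacle is precisely this last step: executing the Frobenius asymptotics near $x=1$ in the limit-circle regime carefully enough to show that $\dom{L^*}$ singles out the analytic branch. The rest of the argument---symmetry, Frobenius at $x=0$, and the limit-point case at $x=1$---is a largely mechanical application of $L^2_w$-integrability.
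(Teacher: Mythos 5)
Your proposal is correct and follows essentially the same route as the paper: symmetry by integration by parts, the chain $L_c \subset L \subset L^* \subset L_c^*$, distributional-to-classical regularity, Frobenius analysis at both endpoints with $L^2_w$-membership forcing the analytic branch at $x=0$ (and at $x=1$ when $\varepsilon \leq 1$), and the adjoint boundary form at $x=1$ killing the singular $(1-x)^{-1/\varepsilon}$ component for $u \in \dom{L^*}$. The only cosmetic differences are your use of limit-point/limit-circle language and a generic $f$ with $f(1)\neq 0$ where the paper picks $f(x)=\sin(\pi x/2)$.
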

\begin{proof}

We first show that $L$ is symmetric.  For all $f, g \in \dom{L}$ we have
\begin{eqnarray*}
	\ip{Lf}{g}_w 	& = & -\int_0^1 (pf')'(x) \overline{g(x)} \mathrm{d}x\\
								& = & -p(1)f'(1)\overline{g(1)} + p(0)f'(0)\overline{g(0)} + \int_0^1 f'(x)p(x)\overline{g'(x)} \mathrm{d}x\\
								& = & f(1)p(1)\overline{g'(1)} - f(0)p(0)\overline{g'(0)} -\int_0^1 f(x) \overline{(pg')'(x)} \mathrm{d}x\\
								& = & \ip{f}{Lg}_w
\end{eqnarray*}
since $p(1)=f(0)=g(0)=0$.

Now it is clear that $L_c$ is also symmetric and

\[ L_c \subseteq L \subseteq L^* \subseteq L_c^*, \]

the last two being closed.

We now prove the equivalence of statements (a)--(c):

(a) $\Rightarrow$ (b) $\Rightarrow$ (c): Immediate.

(c) $\Rightarrow$ (a): For all $\phi \in \mathcal{C}_c^{\infty}(0,1)$ we have
\begin{eqnarray*}
\int_0^1 u(x) (p\phi')'(x) \mathrm{d}x & = & -\ip{u}{L_c\conj{\phi}}_w = -\ip{(L_c)^*u}{\conj{\phi}}_w = -\mu \ip{u}{\conj{\phi}}_w\\
& = & -\mu\int_0^1 u(x)\phi(x)w(x)\mathrm{d}x
\end{eqnarray*}
so $(pu')' = -\mu wu$ when we consider $u$ as an element of the space $\mathcal{D}'(0,1)$ of distributions on the test-function space $\mathcal{C}_c^{\infty}(0,1)$. Since $u \in \Ltwow$, $w^{1/2}u \in \Ltwo \subset \Lone$. Also $w^{1/2} \in \mathcal{C}((0,1])$, so $wu \in L^1(\delta,1)$ for any $\delta \in (0,1)$. Therefore $pu' \in W^1(\delta,1)$ for any such $\delta$ and hence $pu'$ has a representation which is continuous on $(0,1]$ given by
\[(pu')(x) = -\int_x^1(pu')'(y) \mathrm{d}y + c = \mu \int_x^1 w(y)u(y) \mathrm{d}y + c\]
for some constant $c$ and all $x \in (0,1]$. Since $p$ is continuous on $[0,1]$ and $p>0$ on $[0,1)$, $u'$ is continuous on $(0,1)$, i.e. $u$ is continuously differentiable on $(0,1)$. It now follows from the above equation that $pu'$ is in fact continuously differentiable on $(0,1)$. Since $p$ is continuously differentiable and non-zero on $(0,1)$, we see that $u$ is twice differentiable in $(0,1)$ and hence a classical solution of equation \eqnref{eq:heun}. Considering the Frobenius expansions at the left-hand endpoint and the condition that $u \in \Ltwow$ we find that $u \in \mathcal{C}^{\infty}([0,1))$ with $u(0)=0$. Considering the Frobenius expansions at the right hand endpoint, we see that either $u(x) \sim 1$ as $x \to 1-$ or $u(x) \sim (1-x)^{-1/\varepsilon}$ as $x \to 1-$. We are required to show that it is the former which holds. We have
\begin{equation}
	\ip{Lf}{u}_w = \ip{f}{L^*u}_w
	\end{equation}
for all $f \in \dom{L}$. Since $u$ is smooth in $[0,1)$, $L^*u = \mu u = -w^{-1}(pu')'$ in the classical sense of differentiation. So
\begin{eqnarray*}
	-\int_0^1(pf')'(x)\conj{u(x)}\mathrm{d}x	& = &	-\int_0^1f(x)\conj{(pu')'(x)}\mathrm{d}x\\
																						& = &	-\left[f(x)p(x)\conj{u'(x)}\right]_0^{x \to 1-} + \int_0^1f'(x)p(x)\conj{g'(x)}\mathrm{d}x\\
																						& = & \left[f'(x)p(x)\conj{u(x)}-f(x)p(x)\conj{u'(x)}\right]_0^{x\to 1-} -\int_0^1(pf')'(x)\conj{g(x)}\mathrm{d}x
	\end{eqnarray*}
for all $f \in \dom{L}$. Since $f(0) = u(0) = 0$, this implies that
\begin{equation}
	\lim_{x \to 1-} p(x)\left[f'(x)\conj{u(x)} - f(x)\conj{u'(x)}\right] = 0
	\end{equation}
for all $f \in \dom{A}$. By choosing $f(x) = \sin(\pi x/2)$ we see that \[\lim_{x \to 1-} p(x)\conj{u'(x)} = 0.\] If $u(x) \sim (1-x)^{-1/\varepsilon}$ as $x \to 1-$ then $u'(x) \sim (1-x)^{-1-1/\varepsilon}$ as $x \to 1-$ and hence $p(x)\conj{u'(x)} \sim 1$ as $x \to 1-$. This is a contradiction, so $u(x) \sim 1$ as $x \to 1-$, as required.

We now assume $0 < \varepsilon \leq 1$. It is immediate that (c) implies (d). The proof that (d) implies (a) is similar to the proof that (c) implies (a), but the possibility that $u(x) \sim (1-x)^{-1/\varepsilon}$ as $x \to 1-$ is ruled out by the condition that $u \in \Ltwow$.
\qed
\end{proof}

\begin{theorem}\label{thm:sa}
The Sturm-Liouville operator $L$ is essentially self-adjoint. If $0 < \varepsilon \leq 1$ then $\bar{L}_c = \bar{L}$.
\end{theorem}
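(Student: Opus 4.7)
The plan is to deduce essential self-adjointness of $L$ from the equivalences already proved in Proposition \ref{prp:domains}, via the standard criterion that a densely defined symmetric operator $T$ on a Hilbert space is essentially self-adjoint if and only if its adjoint $T^*$ has no non-real eigenvalues (equivalently, both deficiency subspaces $\ker(T^* \mp i I)$ vanish). Symmetry of $L$ has already been verified in Proposition \ref{prp:domains}, and $\dom{L}$ is dense in $\Ltwow$ (it contains $\mathcal{C}_c^{\infty}(0,1)$, which is dense since $w$ is positive and continuous on $(0,1)$). Hence it will suffice to show that every eigenvalue of $L^*$ is real.

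To do so, I would take any eigenpair $(\mu, u)$ of $L^*$ with $\mu \in \setC$ and $u \in \dom{L^*}\setminus\{0\}$, and apply the implication (c) $\Rightarrow$ (a) of Proposition \ref{prp:domains} to conclude that $\mu$ is in fact an eigenvalue of $L$ itself with a corresponding eigenvector in $\dom{L}$. The usual one-line pairing against this eigenvector, using the symmetry of $L$, then forces $\mu \in \setR$. Consequently $\ker(L^* \mp i I) = \{0\}$, so $L$ is essentially self-adjoint and $\bar{L} = L^*$.

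For the statement under $0 < \varepsilon \leq 1$, I would rerun the same argument with $L_c$ in place of $L$, this time invoking the equivalence (a) $\Leftrightarrow$ (d) of Proposition \ref{prp:domains}: any eigenvalue of $L_c^*$ must be an eigenvalue of $L$, hence real, and since $L_c$ is clearly symmetric (as a restriction of $L$), this gives essential self-adjointness of $L_c$, i.e.\ $\bar{L}_c = L_c^*$. Then the inclusion chain
\[ \bar{L}_c \subseteq \bar{L} = L^* \subseteq L_c^* \]
has self-adjoint operators at both ends, and since a self-adjoint operator admits no proper symmetric extension, every inclusion must be an equality; in particular $\bar{L}_c = \bar{L}$. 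The genuine analytic obstacle — namely ruling out the $(1-x)^{-1/\varepsilon}$ behaviour at the right-hand endpoint so that eigenvectors of $L^*$ (or $L_c^*$) actually lie in $\dom{L}$ — has already been overcome in Proposition \ref{prp:domains}, so the theorem itself reduces to a short application of deficiency-index theory with no further difficulty.
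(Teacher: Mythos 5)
Your proposal is correct and follows essentially the same route as the paper: both deduce from Proposition \ref{prp:domains} that every eigenvalue of $L^*$ (resp.\ $L_c^*$) is an eigenvalue of the symmetric operator $L$ and hence real, conclude that the deficiency indices vanish, and then obtain $\bar{L}_c=\bar{L}$ from the fact that the essentially self-adjoint $L_c$ admits $\bar{L}$ as a self-adjoint extension. Your closing inclusion chain is just a slightly more explicit phrasing of the paper's final sentence.
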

\begin{proof}
Suppose that $\mu$ is an eigenvalue of $L^*$. Then, by Proposition \ref{prp:domains}, $\mu$ is an eigenvalue of $L$ and hence real, since $L$ is symmetric. Hence the deficiency indices of $L$ are both zero, so $L$ is essentially self adjoint (see Theorem 1.2.7 in \cite{stdo}). If $0<\varepsilon\leq1$ then $L_c^*$ is also essentially self-adjoint, by the same argument. Since $\bar{L}$ is a self-adjoint extension of $L_c$, the result follows. \qed
\end{proof}

\begin{lemma}\label{lem:injective}
The operator $\bar{L}$ is injective.
\end{lemma}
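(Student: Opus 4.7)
The plan is to reduce the question to the smooth domain by invoking Proposition \ref{prp:domains}. That proposition tells us that any eigenvalue of $\bar L$ is also an eigenvalue of $L$ with an eigenvector in $\dom{L} = \{f \in \smooth : f(0) = 0\}$. So it suffices to show that the only solution of $Lu = 0$ with $u \in \smooth$ and $u(0)=0$ is $u\equiv 0$.

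Once reduced in this way, the argument is an elementary ODE calculation. From $-(pu')'=0$ we immediately get $pu' \equiv c$ for some constant $c \in \setC$. Since $p(0)=1$, we have $u'(x) = c/p(x)$ on $[0,1)$. On the other hand, from the explicit formula for $p$, we have $p(x) \sim 2^{1-1/\varepsilon}(1-x)^{1+1/\varepsilon}$ as $x \to 1-$, so $c/p(x)$ is unbounded near $x=1$ whenever $c \neq 0$. But if $u \in \smooth$, then $u'$ is continuous and hence bounded on $[0,1]$, forcing $c=0$. Then $u'\equiv 0$ on $[0,1]$, and combined with the boundary condition $u(0)=0$ this gives $u \equiv 0$, as required.

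There is no real obstacle here; the only thing to be careful about is that the hypothesis $u \in \dom{\bar L}$ is a priori much weaker than $u \in \smooth$, and we rely critically on Proposition \ref{prp:domains} to upgrade the regularity of any eigenvector of $\bar L$ before running the ODE argument.
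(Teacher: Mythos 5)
Your proof is correct, but after the shared first step it takes a genuinely different route from the paper's. Both arguments begin by invoking Proposition \ref{prp:domains} to convert a putative eigenvector of $\bar{L}$ at eigenvalue $0$ into a classical eigenfunction $u \in \dom{L} = \mathcal{C}_0^{\infty}([0,1])$ of $-(pu')' = 0$. The paper then outsources the contradiction: via Corollary \ref{cor:correspondence}, the value $\lambda = 0$ would be an eigenvalue of $A_+$, contradicting Davies's bound $\lambda > 1$ for all real eigenvalues of $A_+$ proved in \cite{davies-2007}. You instead finish with an elementary ODE observation: $pu'$ is a constant $c$, and since $u'$ is continuous (hence bounded) on the closed interval $[0,1]$ while $p(x) \to 0$ as $x \to 1-$, necessarily $c = 0$; then $u' \equiv 0$ and $u(0) = 0$ force $u \equiv 0$. (Even more directly, $c = p(1)u'(1) = 0$.) Your version is self-contained --- it does not depend on Section \ref{sec:correspondence} or on an external quantitative spectral bound that is much stronger than what injectivity requires --- and it is in the same spirit as the strict-positivity argument the paper later gives in Corollary \ref{cor:nonneg}; the paper's version is shorter on the page but makes the lemma logically dependent on the eigenvalue correspondence and on \cite{davies-2007}. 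One cosmetic remark: the clause ``since $p(0)=1$'' is not what licenses dividing by $p$; what you actually need is that $p > 0$ on $[0,1)$, which holds.
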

\begin{proof}
Suppose for a contradiction that $\bar{L}$ is not injective. Then $0$ is an eigenvalue of $\bar{L}$. By Proposition \ref{prp:domains}, $0$ is also an eigenvalue of the classical Sturm-Liouville problem \eqnref{eq:sl} and hence of $A_+$ by the work in Section \ref{sec:correspondence}. Davies showed in \cite{davies-2007} that $\lambda>1$ for all real eigenvalues $\lambda$ of $A_+$, so this is a contradiction. \qed
\end{proof}

\section{Compactness of the resolvent}

In this section we give the integral kernel of the inverse of $\bar{L}$ explicitly, and use this to show that the resolvent is compact. This yields our result that the spectrum is discrete and the eigenvalues of $\bar{L}$ accumulate at $+\infty$.

We define $\gamma : [0,1] \to \setR \cup \{\infty\}$ by
\begin{equation}
	\gamma(x) = \int_0^x p(t)^{-1} \mathrm{d}t
\end{equation}
for all $x \in [0,1]$ and $G : [0,1] \times [0,1] \to \setR \cup \{\infty\}$ by
\begin{equation}
	G(x,y) = \left\{ \begin{array}{cc}	\gamma(x)	&	\textrm{if } x \leq y\\
																		\gamma(y)	&	\textrm{if } x \geq y
									\end{array}\right.
\end{equation}

\begin{lemma}\label{lem:props}
If $G$ is as above we have:

(i) $G(x,y) \in \setR$ for all $x,y \in [0,1]$ except when $x=y=1$;

(ii) $G(x,y) = G(y,x)$ for all $x,y \in [0,1]$;

(iii) $\frac{\partial}{\partial y} G(x,y) = \chi_{[0,x)}(y) p(y)^{-1}$ for all $x \in [0,1]$, $y \in (0,1) \setminus \{x\}$;

(iv) $\frac{\partial}{\partial x} G(x,y) = \chi_{[0,y)}(x) p(x)^{-1}$ for all $y \in [0,1]$, $x \in (0,1) \setminus \{y\}$.
\end{lemma}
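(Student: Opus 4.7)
The plan is to read off all four properties from the explicit definition of $\gamma$, with the only analytic content being an asymptotic analysis of $p(t)^{-1}$ near the two endpoints to handle (i); the other parts follow by symmetry and by differentiation of the integral defining $\gamma$.

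For (i) I would start by examining $p(t)^{-1}$ on $[0,1]$. Near $t=0$ we have $p(t) \to 1$, so $p^{-1}$ is bounded and continuous in a neighbourhood of $0$ and poses no integrability problem. Near $t=1$ the factor $(t+1)^{1-1/\varepsilon}$ tends to the positive constant $2^{1-1/\varepsilon}$, while $(1-t)^{1+1/\varepsilon} \to 0$, so $p(t)^{-1} \sim C(1-t)^{-1-1/\varepsilon}$ as $t \to 1-$. Since $1+1/\varepsilon > 1$, this is not integrable near $t=1$. Consequently $\gamma(x)$ is finite for every $x \in [0,1)$ and $\gamma(1)=\infty$. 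The piecewise definition of $G$ only ever evaluates $\gamma$ at $\min(x,y)$; this equals $1$ exactly when $x=y=1$, which gives (i).

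Properties (ii)--(iv) are immediate from the definition. For (ii), swapping $x$ and $y$ also swaps the two cases in the definition of $G$, but in each case the value is $\gamma(\min(x,y))$, which is symmetric. For (iii), fix $x$ and consider $G(x,\cdot)$: on $y < x$ it equals $\gamma(y)$, and since $p^{-1}$ is continuous on $(0,1)$ the fundamental theorem of calculus gives $\partial_y G(x,y) = p(y)^{-1}$; on $y > x$ it equals the constant $\gamma(x)$, so $\partial_y G(x,y) = 0$. Combining gives $\partial_y G(x,y) = \chi_{[0,x)}(y) p(y)^{-1}$ on $(0,1)\setminus\{x\}$. Part (iv) follows from (ii) and (iii) by interchanging the roles of $x$ and $y$.

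I do not expect any real obstacle: the only step requiring any work is the asymptotic $p(t)^{-1} \sim C(1-t)^{-1-1/\varepsilon}$ used to show $\gamma(1)=\infty$, and this is a one-line computation valid for every $\varepsilon \in (0,2)$.
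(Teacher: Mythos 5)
Your proof is correct and follows essentially the same route as the paper: finiteness of $\gamma(x)$ for $x<1$ from the boundedness of $p^{-1}$ on $[0,x]$, symmetry by inspection, and the fundamental theorem of calculus for the partial derivatives. The only difference is that you additionally verify $\gamma(1)=\infty$ via the asymptotic $p(t)^{-1}\sim C(1-t)^{-1-1/\varepsilon}$, which is correct but not needed for the statement as written.
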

\begin{proof}
(i) If $x < 1$ or $y < 1$ then $p^{-1}$ is bounded on $[0,\min\{x,y\}]$ and hence the integral is finite.

(ii) Immediate from the symmetry of the definition.

(iii) For $y \in (0,x)$, $\pdd{y}G(x,y) = \gamma'(y) = p(y)^{-1}$, whereas for $y \in (x,1)$, $\pdd{y}G(x,y) = \dd{y}\gamma(x) = 0$.

(iv) Similar to the proof of (iii). \qed

\end{proof}

\begin{theorem}\label{thm:inverse}
The operator $\bar{L}$ has a compact inverse $R$ given by
\begin{equation}
	(Rf)(x) = \int_0^1 G(x,y) f(y) w(y)\mathrm{d}y
	\label{eq:resolvent}
\end{equation}
for all $f \in \Ltwow$ and all $x \in [0,1)$.
\end{theorem}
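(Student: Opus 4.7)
My strategy is to verify directly that the operator $R$ defined by \eqnref{eq:resolvent} is a right inverse of $\bar{L}$, then upgrade this to a two-sided inverse using the injectivity provided by Lemma \ref{lem:injective}, and finally deduce compactness from a Hilbert--Schmidt estimate. Well-definedness of $Rf$ as an element of $\Ltwow$ for each $f \in \Ltwow$ will follow from Cauchy--Schwarz together with the Hilbert--Schmidt bound proved at the end.

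The heart of the argument is to show that $Rf \in \dom{L^*}$ with $L^*Rf = f$ for every $f \in \Ltwow$; by Theorem \ref{thm:sa} this is equivalent to $Rf \in \dom{\bar{L}}$ with $\bar{L}Rf = f$. By definition of the adjoint, it is enough to verify the identity $\ip{Lg}{Rf}_w = \ip{g}{f}_w$ for all $g \in \dom{L}$. To prove it, I would swap the order of integration (justified by Fubini, since $g$ is smooth on $[0,1]$) and then integrate by parts in $x$ for fixed $y$. Using Lemma \ref{lem:props}(iv) together with the boundary facts $p(1) = 0$, $G(0,y) = 0$ and $g(0) = 0$ gives
\[ \int_0^1 (pg')'(x)\, G(x,y)\,\mathrm{d}x = \big[(pg')(x)\, G(x,y)\big]_0^1 - \int_0^y g'(x)\,\mathrm{d}x = -g(y), \]
and substituting yields the required identity at once. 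Once $R$ is known to be a right inverse of $\bar{L}$, Lemma \ref{lem:injective} promotes it to a genuine inverse: for any $u \in \dom{\bar{L}}$, the element $R\bar{L}u - u$ lies in the kernel of $\bar{L}$, hence vanishes.

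For compactness, I would pass to the unitarily equivalent operator on $\Ltwo$ via the map $U : \Ltwow \to \Ltwo$, $Uf = w^{1/2}f$, which conjugates $R$ to the integral operator on $\Ltwo$ with kernel $w(x)^{1/2} G(x,y) w(y)^{1/2}$. Compactness therefore reduces to the Hilbert--Schmidt bound
\[ \int_0^1 \int_0^1 G(x,y)^2\, w(x)\, w(y)\,\mathrm{d}x\,\mathrm{d}y < \infty. \]
By the symmetry of $G$ this equals $2\int_0^1 \gamma(y)^2\, w(y) \bigl(\int_y^1 w(x)\,\mathrm{d}x\bigr)\,\mathrm{d}y$, which I would then estimate using the asymptotic behaviour $\gamma(x) \sim x$, $w(x) \sim x^{-1}$ near $0$ and $\gamma(x) \sim C(1-x)^{-1/\varepsilon}$, $w(x) \sim C'(1-x)^{1/\varepsilon}$ near $1$.

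The main obstacle I anticipate is the endpoint analysis near $x = 1$ in the Hilbert--Schmidt estimate: the singular growth of $\gamma$ and the singular decay of $w$ must combine into an integrable quantity for every $\varepsilon \in (0,2)$, so one has to track the exponents carefully to see that the combined factor behaves like $(1-y)$ at the right endpoint regardless of the value of $\varepsilon$.
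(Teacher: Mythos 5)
Your proposal is correct, and the compactness and left-inverse parts coincide with the paper's (the same Hilbert--Schmidt bound, with the same endpoint exponents, and the same use of Lemma \ref{lem:injective} to pass from $\bar{L}R = I$ to $R\bar{L}f = f$). Where you genuinely diverge is in proving $\bar{L}Rf = f$: the paper works \emph{strongly}, showing for $f \in \mathcal{C}_c^{\infty}(0,1)$ that $Rf$ is smooth with $(Rf)'(x) = p(x)^{-1}\int_x^1 f w\,\mathrm{d}y$, hence $Rf \in \dom{L}$ and $LRf = f$ classically, and then extends to all of $\Ltwow$ by density of $\mathcal{C}_c^{\infty}$ and closedness of $\bar{L}$; you instead work \emph{weakly}, verifying $\ip{Lg}{Rf}_w = \ip{g}{f}_w$ for every $g \in \dom{L}$ and every $f \in \Ltwow$ at once, so that $Rf \in \dom{L^*}$ with $L^*Rf = f$, and then invoking essential self-adjointness (Theorem \ref{thm:sa}, giving $L^* = \bar{L}$) to conclude. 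Your route avoids the density/limit argument and never needs to know that $Rf$ is smooth, at the price of making Theorem \ref{thm:sa} a logical prerequisite (the paper's proof of Theorem \ref{thm:inverse} is independent of it); the paper's route yields the extra information that $R$ maps $\mathcal{C}_c^{\infty}(0,1)$ into $\dom{L}$ itself. Your integration by parts in $x$ for fixed $y$ is sound: the boundary terms vanish because $p(1)=0$ with $G(1,y)=\gamma(y)<\infty$ for $y<1$, and $G(0,y)=\gamma(0)=0$.

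One small caveat: justifying Fubini by ``$g$ is smooth on $[0,1]$'' alone is too quick, since $G$ blows up near $(1,1)$ and $w \notin \Lone$. The clean fix, available entirely within your own setup, is to write $(pg')' = -w\,Lg$ with $Lg \in \Ltwow$ and estimate
\[
\int_0^1\!\!\int_0^1 \mod{Lg(x)}\,G(x,y)\,\mod{f(y)}\,w(x)w(y)\,\mathrm{d}x\,\mathrm{d}y \;\leq\; \norm{Lg}_w\,\norm{f}_w\left(\int_0^1\!\!\int_0^1 G^2\,w(x)w(y)\,\mathrm{d}x\,\mathrm{d}y\right)^{1/2},
\]
using the Hilbert--Schmidt bound you prove at the end; so the gap is cosmetic rather than structural.
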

\begin{proof}
We first prove that $G \in L^2([0,1]\times[0,1], w(x)\mathrm{d}x \times w(y)\mathrm{d}y)$, and hence that \eqnref{eq:resolvent} defines a Hilbert-Schmidt operator on $\Ltwow$. If $0 < y \leq x <1$ then
\begin{eqnarray*}
	\mod{G(x,y)}^2w(y)	& = & \left( \int_0^y \frac{\mathrm{d}t}{p(t)} \right)^2 w(y)\\
													& \leq & c_0 \left((1-y)^{-1/\varepsilon} - 1 \right)^2 w(y)\\
													& \leq & c_0 y^{-1} \left( 1 - (1-y)^{1/\varepsilon} \right) \left((1-y)^{-1/\varepsilon} - 1 \right) (y+1)^{-1/\varepsilon}
\end{eqnarray*}
for some constant $c_0$. Hence
\begin{small}
	\begin{eqnarray*}
		\int_y^1 \mod{G(x,y)}^2 w(x)\mathrm{d}xw(y)	& \leq & c_0 y^{-1} \left( 1 - (1-y)^{1/\varepsilon} \right) \left((1-y)^{-1/\varepsilon} - 1 \right) (y+1)^{-1/\varepsilon} \int_y^1 w(x) \mathrm{d}x\\
																								& \leq & c_1 y^{-2} \left(1-(1-y)^{1/\varepsilon} \right) \left((1-y)^{-1/\varepsilon} - 1 \right) (y+1)^{-1/\varepsilon} \int_y^1 (1-x)^{1/\varepsilon} \mathrm{d}x\\
																								& \leq & c_2 y^{-2} \left(1-(1-y)^{1/\varepsilon} \right) \left((1-y)^{-1/\varepsilon} - 1 \right) (y+1)^{-1/\varepsilon} (1-y)^{1+1/\varepsilon}\\
																								& \leq & c_2 y^{-2} (1-y) \left(1-(1-y)^{1/\varepsilon}\right)^2 (y+1)^{-1/\varepsilon}
	\end{eqnarray*}
\end{small}
for some constants $c_1$ and $c_2$ and all $y \in (0,1]$. As a function of $y$, this is continuous on $(0,1]$, and in a neighbourhood of $0$ we have
\[\int_y^1 \mod{G(x,y)}^2 w(x) \mathrm{d}x w(y) \leq c_3 y^{-\varepsilon/2} \left( \frac{1-(1-y)^{1/\varepsilon}}{y^{1-\varepsilon/4}} \right)^2 \leq c_4 y^{-\varepsilon/2} \]
for some constants $c_3$ and $c_4$. Since $\varepsilon < 2$ we conclude that
\[ \int_0^1 \int_y^1 \mod{G(x,y)}^2 w(x) \mathrm{d}x w(y) \mathrm{d}y < \infty \]
and hence
\[
	\int_0^1 \int_0^1 \mod{G(x,y)}^2 w(x)\mathrm{d}x w(y)\mathrm{d}y	= 2\int_0^1 \int_y^1 \mod{G(x,y)}^2 w(x)\mathrm{d}x w(y)\mathrm{d}y < \infty
\]
as required, since $G(y,x) =G(x,y)$ by Lemma \ref{lem:props}.

We now prove that $R$ is the inverse of $\bar{L}$. Suppose that $f \in \mathcal{C}_c^{\infty}(0,1)$. We have
\begin{equation}
	(Rf) (x) = \int_0^1 G(x,y)f(y)w(y)\mathrm{d}y
\end{equation}
and, since $f$ is zero in sufficiently small neighbourhoods of $0$ and $1$, it is easy to show that this is differentiable with
\begin{eqnarray*}
	(Rf)'(x)	& = & \int_0^1 \pdd{x} G(x,y)f(y)w(y)\mathrm{d}y\\
						& = & \int_0^1 \chi_{[0,y)}(x) p(x)^{-1} f(y) w(y)\mathrm{d}y\\
						& = & p(x)^{-1} \int_x^1 f(y) w(y)\mathrm{d}y
\end{eqnarray*}
by Lemma \ref{lem:props}. The last integral is smooth and vanishes in a neighbourhood of $1$, and $p(x)^{-1}$ is smooth on $[0,1)$, so this implies that $Rf \in \mathcal{C}^{\infty}([0,1])$. Also
\[(Rf)(0) = \int_0^1 \gamma(0) f(y)w(y)\mathrm{d}y = 0\]
since $\gamma(0) = 0$. Therefore $Rf \in \dom{L}$ and
\begin{eqnarray*}
	(LRf)(x)	& = & -w(x)^{-1}\dd{x} \left( p(x) \dd{x}\int_0^1 G(x,y) f(y) w(y)\mathrm{d}y \right) \\
						& = & -w(x)^{-1}\dd{x} \left( p(x) \int_0^1 \chi_{[0,y)}(x) p(x)^{-1} f(y) w(y)\mathrm{d}y \right)\\
						& = & -w(x)^{-1}\dd{x} \int_x^1 f(y) w(y)\mathrm{d}y\\
						& = & f(x)
\end{eqnarray*}
for all $f \in \mathcal{C}_c^{\infty}([0,1])$. If $f \in \Ltwow$, let $(f_n)$ be a sequence in $\mathcal{C}_c^{\infty}([0,1])$ such that $\norm{f_n - f}_w \to 0$ as $n \to 0$. Then $\norm{Rf_n - Rf}_w \to 0$ and $\norm{LRf_n - f}_w = \norm{f_n - f}_w \to 0$ as $n \to \infty$. Hence $Rf \in \dom{\bar{L}}$ and $\bar{L}Rf=f$.

Conversely, let $f \in \dom{\bar{L}}$. Then $R\bar{L}f \in \dom{\bar{L}}$ and $\bar{L}R\bar{L}f = \bar{L}f$. Now $R\bar{L}f = f$ since $\bar{L}$ is injective by Lemma \ref{lem:injective}.
\qed
\end{proof}

\begin{corollary}\label{cor:nonneg}
The Sturm-Liouville operator $\bar{L}$ is non-negative in the sense that $\spec{\bar{L}} \subseteq (0,\infty)$.
\end{corollary}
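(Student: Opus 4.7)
The plan is to verify non-negativity via the quadratic form and then rule out zero using the pieces already assembled in the excerpt. Concretely, the argument has three ingredients: a form-positivity calculation on $\dom{L}$, the closure/self-adjointness of $\bar{L}$, and the discreteness of the spectrum coming from compactness of the resolvent.

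First, I would integrate by parts exactly as in the symmetry calculation in Proposition \ref{prp:domains}, but with $g = f$. For $f \in \dom{L} = \mathcal{C}_0^\infty([0,1])$,
\[
\ip{Lf}{f}_w = -\int_0^1 (pf')'(x)\conj{f(x)}\,\mathrm{d}x = -\bigl[p(x)f'(x)\conj{f(x)}\bigr]_0^1 + \int_0^1 p(x)|f'(x)|^2\,\mathrm{d}x.
\]
The boundary term at $0$ vanishes because $f(0)=0$, and the boundary term at $1$ vanishes because $p(1)=0$ and $f$ is smooth up to the endpoint. Hence
\[
\ip{Lf}{f}_w = \int_0^1 p(x)|f'(x)|^2\,\mathrm{d}x \geq 0
\]
for every $f \in \dom{L}$, so $L$ is a non-negative symmetric operator on $\dom{L}$.

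Next, by Theorem \ref{thm:sa} the operator $L$ is essentially self-adjoint, so $\bar{L}$ is its unique self-adjoint extension. A non-negative symmetric operator extends to a non-negative self-adjoint operator via its Friedrichs extension, and this extension coincides with $\bar{L}$ in our setting; alternatively, for any $u \in \dom{\bar{L}}$ one approximates by $f_n \in \dom{L}$ with $f_n \to u$ and $Lf_n \to \bar{L}u$ in $\Ltwow$, whence $\ip{\bar{L}u}{u}_w = \lim \ip{Lf_n}{f_n}_w \geq 0$. Either way, $\spec{\bar{L}} \subseteq [0,\infty)$.

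Finally, Theorem \ref{thm:inverse} asserts that $\bar{L}$ has compact inverse $R$, so $\spec{\bar{L}}$ is purely discrete and consists of eigenvalues of finite multiplicity. Lemma \ref{lem:injective} rules out $0$ as an eigenvalue, hence as a spectral point, and we conclude $\spec{\bar{L}} \subseteq (0,\infty)$. I do not expect any genuine obstacle here; the only point to be slightly careful about is the vanishing of the boundary term at $x=1$, which is immediate from the explicit formula for $p$ and the smoothness of elements of $\dom{L}$.
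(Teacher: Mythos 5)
Your argument is correct, and its central computation --- integrating $-\int_0^1 (pf')'\conj{f}\,\mathrm{d}x$ by parts, with the boundary terms killed by $f(0)=0$ and $p(1)=0$, to get $\int_0^1 p\mod{f'}^2\,\mathrm{d}x$ --- is exactly the paper's. Where you diverge is in the architecture around it. The paper first uses the compact resolvent and Proposition \ref{prp:domains} to reduce everything to eigenvalues of $L$ with smooth eigenvectors, and then gets \emph{strict} positivity in one stroke: for a normalised eigenvector $f$, $\mu = \int_0^1 p\mod{f'}^2\,\mathrm{d}x$ cannot vanish because $f'=0$ a.e.\ would force $f$ to be constant, hence zero by $f(0)=0$. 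You instead establish non-negativity of the form on all of $\dom{L}$, pass it to $\bar{L}$ by closure (or Friedrichs uniqueness), conclude $\spec{\bar{L}}\subseteq[0,\infty)$, and then excise $0$ by citing Lemma \ref{lem:injective} (or, even more directly, the bounded inverse from Theorem \ref{thm:inverse}). Both routes are legitimate and non-circular, since Lemma \ref{lem:injective} and Theorem \ref{thm:inverse} precede the corollary. Your version buys a statement that is slightly stronger in spirit (non-negativity of the quadratic form on the whole operator domain, not just at eigenvectors) and avoids invoking Proposition \ref{prp:domains} at this point; the cost is that your exclusion of $0$ leans on Lemma \ref{lem:injective}, whose proof rests on Davies's nontrivial bound $\lambda>1$ for eigenvalues of $A_+$, whereas the paper's strictness argument is entirely elementary and self-contained. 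If you want to match the paper's economy, note that the constancy argument gives you $>0$ for free at the eigenvector level without any appeal to the injectivity lemma.
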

\begin{proof}
Since $\bar{L}$ has compact resolvent, it has empty essential spectrum, and since it is self-adjoint its spectrum is thus equal to the set of its eigenvalues. By Proposition \ref{prp:domains}, it is sufficient to show that all eigenvalues of $L$ are non-negative. If $\mu$ is an eigenvalue of $L$ and $f$ is a corresponding eigenvector with $\norm{f}_w = 1$ then
\begin{equation}
	\mu = \ip{Lf}{f}_w = -\int_0^1 (pf')'(x)\conj{f(x)}\mathrm{d}x = \int_0^1 p(x)\mod{f'(x)}^2 \mathrm{d}x > 0
	\end{equation}
since $p$ is non-negative on $[0,1]$. Note that the inequality is strict, since $f' = 0$ a.e. would imply that $f$ is constant and hence $0$, since $f(0) = 0$.
\qed
\end{proof}

\begin{corollary}\label{cor:evs}
There exists a complete orthonormal set of eigenvectors $\{f_n\}_{n=1}^{\infty}$ of $L$ with corresponding eigenvalues $\mu_n \geq 0$ which converge monotonically to $+\infty$ as $n \to \infty$.
\end{corollary}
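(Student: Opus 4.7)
The plan is to apply the spectral theorem for compact self-adjoint operators to the resolvent $R = \bar{L}^{-1}$. By Theorem \ref{thm:inverse}, $R$ is a bounded operator on $\Ltwow$ which is Hilbert--Schmidt, hence compact; and it is self-adjoint, either because its integral kernel $G$ is symmetric by Lemma \ref{lem:props}(ii), or because $\bar{L}$ is self-adjoint and injective (Theorem \ref{thm:sa}, Lemma \ref{lem:injective}), so its inverse inherits self-adjointness.

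The key steps are as follows. First I would invoke the Hilbert--Schmidt spectral theorem to produce a complete orthonormal set $\{f_n\}_{n=1}^{\infty}$ in $\Ltwow$ consisting of eigenvectors of $R$, with real eigenvalues $\nu_n$ that accumulate only at $0$. Because $\bar{L}$ is injective, $R$ has trivial kernel, so $\nu_n \neq 0$ for all $n$; setting $\mu_n = \nu_n^{-1}$, each $f_n$ belongs to $\dom{\bar{L}}$ and satisfies $\bar{L} f_n = \mu_n f_n$. Corollary \ref{cor:nonneg} then forces $\mu_n > 0$, and from $\nu_n \to 0$ we conclude $\mu_n \to +\infty$. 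After reindexing in non-decreasing order, the sequence $(\mu_n)$ is monotone and tends to $+\infty$. Finally, Proposition \ref{prp:domains} transfers the pair $(\mu_n, f_n)$ from $\bar{L}$ back to $L$: each such eigenvector in fact lies in $\dom{L} = \mathcal{C}_0^{\infty}([0,1])$ and satisfies $L f_n = \mu_n f_n$ in the classical sense.

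There is no real obstacle here; all of the substantive work has already been carried out in establishing the self-adjointness of $\bar{L}$, the compactness of $R$, its injectivity, and its non-negativity. The only minor points to check are that trivial kernel of $R$ together with completeness of its eigenvectors indeed exhausts the spectrum of $\bar{L}$, and that reordering the $\mu_n$ by magnitude preserves both the orthonormality and the completeness of the basis $\{f_n\}$, both of which are immediate.
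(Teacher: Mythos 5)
Your proposal is correct and follows essentially the same route as the paper: the paper simply cites the standard spectral theory of a self-adjoint operator with compact inverse (which you unpack via the Hilbert--Schmidt theorem applied to $R$, together with Corollary \ref{cor:nonneg} and Lemma \ref{lem:injective}) and then transfers the eigenpairs from $\bar{L}$ to $L$ via Proposition \ref{prp:domains}, exactly as you do.
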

\begin{proof}
The corresponding result for $\bar{L}$ is standard, and the result for $L$ follows by Proposition \ref{prp:domains}.
\qed
\end{proof}

\begin{corollary}\label{cor:hevs}
The operator $-iH$ defined in Section \ref{sec:intro} has infinitely many eigenvalues which can be enumerated $\{\lambda_n\}_{n=-\infty}^{\infty}$, in increasing order, such that $\lambda_0 = 0$, $\lambda_{-n} = -\lambda_n$ and $\lambda_n \to \infty$ as $n \to \infty$.
\end{corollary}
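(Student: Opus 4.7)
The plan is to assemble the pieces already established and lift the eigenvalue structure of $\bar{L}$ through the chain of correspondences up to $-iH$. First I would invoke Corollary \ref{cor:evs} together with Corollary \ref{cor:nonneg} and Lemma \ref{lem:injective} to obtain a sequence of strictly positive eigenvalues $\mu_n$ of $L$ with $\mu_n \to +\infty$. Applying Corollary \ref{cor:correspondence} then converts these into an infinite sequence of strictly positive real eigenvalues of $A_+$, namely $\varepsilon \mu_n / 2$, which we enumerate in increasing order as $\lambda_1 < \lambda_2 < \cdots$ with $\lambda_n \to +\infty$.

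Next I would use the decomposition $A = A_- \oplus 0 \oplus A_+$ from \eqnref{eq:a}. Since $A_-$ is unitarily equivalent to $-A_+$, the eigenvalues of $A_-$ are exactly $-\lambda_n$ for $n \geq 1$, and these accumulate only at $-\infty$. The middle summand contributes $0$ as an eigenvalue (with eigenvector $e_0 \in l^2(\setZ)$ corresponding to the constant Fourier mode). Because the decomposition is an orthogonal direct sum, every eigenvalue of $A$ comes from exactly one of the three blocks, so the spectrum of $A$ consists precisely of $\{-\lambda_n\}_{n \geq 1} \cup \{0\} \cup \{\lambda_n\}_{n \geq 1}$.

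Finally, since $A = \mathcal{F}^{-1}(-iH)\mathcal{F}$ is a unitary equivalence via the Fourier transform, $-iH$ has exactly the same eigenvalues as $A$. I would then set $\lambda_0 = 0$ and $\lambda_{-n} = -\lambda_n$ for $n \geq 1$, yielding the enumeration $\{\lambda_n\}_{n=-\infty}^{\infty}$ in increasing order with the stated symmetry and the divergence $\lambda_n \to \infty$ as $n \to \infty$.

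I do not anticipate any serious obstacle here, as all the substantive work has already been done in the earlier sections; the only minor point to be careful about is checking that no eigenvalues are lost or duplicated when passing between the three blocks of $A$, which is automatic from the orthogonal direct sum structure, and that $0$ is not an eigenvalue of $A_+$ or $A_-$, which follows from Lemma \ref{lem:injective} combined with Corollary \ref{cor:correspondence}.
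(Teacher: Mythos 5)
Your proposal is correct and follows essentially the same route as the paper: pass the eigenvalues $\mu_n$ of $\bar{L}$ through Corollary \ref{cor:correspondence} to get the eigenvalues $\varepsilon\mu_n/2$ of $A_+$, then use the decomposition \eqnref{eq:a} and the unitary equivalences $A_- \cong -A_+$ and $A \cong -iH$ to assemble the full symmetric enumeration. Your extra citations of Corollary \ref{cor:nonneg} and Lemma \ref{lem:injective} to confirm that $0$ is not an eigenvalue of $A_\pm$ are a harmless (and slightly more careful) elaboration of what the paper leaves implicit.
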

\begin{proof}
The eigenvalues of $-iH$ are the same as the eigenvalues of $A$, since the two operators are unitarily equivalent. If $\{\lambda_n\}_{n=1}^{\infty}$ are the eigenvalues of $A_+$ in increasing order then \eqnref{eq:a} tells us that the eigenvalues of $A$ are $\{\lambda_n\}_{n=-\infty}^{\infty}$, where $\lambda_0 = 0$ and $\lambda_{-n} = -\lambda_n$. It follows from Corollary \ref{cor:correspondence}, Proposition \ref{prp:domains} and Corollary \ref{cor:evs} that $\lambda_n = \varepsilon\mu_n/2 \to \infty$ as $n \to \infty$.
\qed
\end{proof}

\section{Eigenvalue asymptotics and numerics}

\subsection{Quadratic form formulation}

When considering self-adjoint operators, it is standard practice to obtain eigenvalue asymptotics and upper and lower eigenvalue bounds by quadratic form techniques and variational methods. Our first task is to identify the precise domain of the quadratic form associated with $\bar{L}$. We also show that $\mathcal{C}_c^{\infty}(0,1)$ is a form core for the associated form, which frequently allows us to restrict our attention to this simpler class of functions in the subsequent analysis.

We define a quadratic form $Q$ on $\Ltwow$ by
\begin{equation}
	Q(f) =	\int_0^1 p \mod{f'}^2 \mathrm{d}x
\end{equation}
for $f$ in the domain
\[ W^{1,2} = \{f \in \Ltwow :Q(f) < \infty\} \subset \{f \in \mathcal{C}(0,1): f' \in L^1_{\mathrm{loc}} \} \]
and define a norm
\[ \norm{u}_{W^{1,2}} = (Q(u) + \norm{u}_w^2)^{1/2}\]
on $W^{1,2}$.

\begin{theorem}
The domain $W^{1,2}$ is complete with respect to $\norm{\cdot}_{W^{1,2}}$.
\end{theorem}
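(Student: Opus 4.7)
The plan is to exploit the fact that $\norm{\cdot}_{W^{1,2}}$ is built from two pieces, $Q(f)^{1/2}$ and $\norm{f}_w$, each measured in a genuine $L^2$-space (the first being $\Ltwo$ after multiplying by $p^{1/2}$, the second being $\Ltwow$). Thus, given a Cauchy sequence $(f_n)$ in $W^{1,2}$, I first extract two limits by completeness of these $L^2$-spaces: there exist $f \in \Ltwow$ with $\norm{f_n - f}_w \to 0$ and $g \in \Ltwo$ with $\norm{p^{1/2}f_n' - g}_{L^2(0,1)} \to 0$. The remainder of the proof is the identification $p^{1/2}f' = g$ (in the appropriate weak sense), from which completeness follows directly.

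The key step, and the main obstacle, is to show that the weak derivative of the candidate limit $f$ really is $p^{-1/2}g$. The issue is that $p$ degenerates at $x=1$ and $w$ is singular at $x=0$ and vanishes at $x=1$, so $\Ltwow$ and $\Ltwo$ are not globally comparable and $p^{-1/2}$ is not globally bounded. I would handle this by localising: on any compact subinterval $[a,b] \subset (0,1)$, the functions $p$, $p^{-1}$, $w$ and $w^{-1}$ are all bounded, so convergence in $\Ltwow$ (respectively $\Ltwo$ after dividing by $p^{1/2}$) restricts to convergence in the unweighted $L^2(a,b)$. Consequently, for any test function $\phi \in \mathcal{C}_c^{\infty}(0,1)$, whose support lies in some such $[a,b]$, one may pass to the limit in the identity
\[
-\int_0^1 f_n(x)\phi'(x)\mathrm{d}x = \int_0^1 f_n'(x)\phi(x)\mathrm{d}x,
\]
obtaining
\[
-\int_0^1 f(x)\phi'(x)\mathrm{d}x = \int_0^1 p(x)^{-1/2}g(x)\phi(x)\mathrm{d}x.
\]
This identifies the distributional derivative $f' = p^{-1/2}g \in L^1_{\mathrm{loc}}(0,1)$, so $f$ admits a continuous representative on $(0,1)$, lies in the ambient space in which $W^{1,2}$ is defined, and satisfies
\[
Q(f) = \int_0^1 p(x)\mod{f'(x)}^2\mathrm{d}x = \int_0^1 \mod{g(x)}^2 \mathrm{d}x < \infty,
\]
so that $f \in W^{1,2}$.

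Finally, convergence in the $W^{1,2}$-norm follows immediately: using $p^{1/2}f' = g$,
\[
\norm{f_n - f}_{W^{1,2}}^2 = \int_0^1 \mod{p(x)^{1/2}f_n'(x) - g(x)}^2\mathrm{d}x + \norm{f_n - f}_w^2,
\]
and both terms tend to zero by construction. This completes the proof; the only substantive technical issue is the local identification of the weak derivative past the singularities of $p$ and $w$ at the endpoints, which is resolved by the compact-subinterval argument above.
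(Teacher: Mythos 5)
Your proof is correct, but it is organised differently from the paper's. You treat $W^{1,2}$ via the isometric embedding $f \mapsto (f, p^{1/2}f')$ into the product $\Ltwow \oplus \Ltwo$: completeness of those two ambient spaces hands you the candidate limits $f$ and $g$ together with global norm convergence for free, and the only work left is the local identification $f' = p^{-1/2}g$ on compact subintervals, where $p$, $p^{-1}$, $w$, $w^{-1}$ are all bounded and the distributional limit passage is routine. The paper instead never invokes completeness of the global weighted spaces: it shows $(f_n)$ is Cauchy in $H^1(\delta,1-\delta)$ for each $\delta$, pieces the local limits $h_\delta$ into a single $h$ with weak derivative $k$, uses the Monotone Convergence Theorem to get $h \in W^{1,2}$, and then proves $\norm{f_n-h}_w \to 0$ and $Q(f_n-h)\to 0$ by hand via a three-term splitting of $(0,1)$ into endpoint neighbourhoods (controlled by the Cauchy property and dominated convergence) and a middle interval (controlled by local convergence). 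The localisation step is common to both arguments, but your route eliminates the paper's most laborious component --- the endpoint epsilon-chasing needed to upgrade local convergence to convergence in $\norm{\cdot}_{W^{1,2}}$ --- at the price of having to observe that $\mathrm{L}^2$ convergence of $p^{1/2}f_n'$ to $g$ forces $Q(f_n-f) = \norm{p^{1/2}f_n'-g}_{\mathrm{L}^2}^2 \to 0$ once $p^{1/2}f'=g$ is established. The one point worth making explicit in your write-up is that the representative of $f$ produced by $\Ltwow$-completeness must be replaced by its locally absolutely continuous representative (which exists precisely because $f'=p^{-1/2}g \in L^1_{\mathrm{loc}}$) in order for $f$ to belong to the class $\{f \in \mathcal{C}(0,1): f' \in L^1_{\mathrm{loc}}\}$ in which $W^{1,2}$ sits; you do note this, so the argument is complete.
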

\begin{proof}
Suppose that $(f_n)$ is a Cauchy sequence in $W^{1,2}$. Let $\delta \in (0,1/2)$. Then
\begin{eqnarray*}
\int_{\delta}^{1-\delta} \mod{f_n(x)}^2 \mathrm{d}x		& \leq &	\sup_{y \in (\delta,1-\delta)} w(y)^{-1} \int_{\delta}^{1-\delta} \mod{f_n(x)}^2 w(x) \mathrm{d}x,\\
\int_{\delta}^{1-\delta} \mod{f'_n(x)}^2 \mathrm{d}x		& \leq &	\sup_{y \in (\delta,1-\delta)} p(y)^{-1} \int_{\delta}^{1-\delta} \mod{f'_n(x)}^2 p(x) \mathrm{d}x
\end{eqnarray*}
so $f_n|_{(\delta,1-\delta)}$ lies in the Sobolev space $H^1(\delta,1-\delta)$. Similar inequalities show that $(f_n|_{(\delta,1-\delta)})$ is a Cauchy sequence in $H^1(\delta,1-\delta)$ and hence converges to some $h_{\delta} \in H^1(\delta,1-\delta)$. It is easy to show that if $\delta_1 < \delta_2$ then $h_{\delta_2} = h_{\delta_1}|_{(\delta_2,1-\delta_2)}$. Hence we may consistently define $h,k:(0,1) \to \setC$ by $h(x) = h_{1/m}(x)$ and $k(x) = h'_{1/m}(x)$ for any $m \in \setN$ such that $x \in (1/m,1-1/m)$. Clearly $h,k \in L^1_{\mathrm{loc}}(0,1)$. Let $\phi \in \mathcal{C}_c^{\infty}(0,1)$ and choose $m \in \setN$ large enough that $\supp{\phi} \subset (1/m,1-1/m)$. Then
\begin{eqnarray*}
\int_0^1 h(x) \phi'(x) \mathrm{d}x	& = &	\int_{1/m}^{1-1/m} h_{1/m}(x) \phi'(x) \mathrm{d}x\\
																		& = & -\int_{1/m}^{1-1/m} h_{1/m}'(x) \phi(x) \mathrm{d}x\\
																		& = & -\int_0^1 k(x) \phi(x) \mathrm{d}x.
\end{eqnarray*}
Since this holds for all $\phi \in \mathcal{C}_c^{\infty}(0,1)$, $k = h'$.

Since $(f_n)$ is Cauchy in $W^{1,2}$, $Q(f_n)$ and $\norm{f_n}_w^2$ are bounded. Let $C_1 = \sup_n Q(f_n)$ and $C_2 = \sup_n \norm{f_n}_w^2$. For all $m \in \setN$,
\[ \int_{1/m}^{1-1/m} \mod{h(x)}^2 w(x) \mathrm{d}x \leq \sup_n \int_{1/m}^{1-1/m} \mod{f_n(x)}^2 w(x) \mathrm{d}x \leq \sup_n \norm{f_n}_w^2 = C_2\]
and hence $\norm{h}_w^2 \leq C_2$ by the Monotone Convergence Theorem. Also for all $m \in \setN$,
\[ \int_{1/m}^{1-1/m} \mod{h'(x)}^2 p(x) \mathrm{d}x \leq \sup_n \int_{1/m}^{1-1/m} \mod{f'_n(x)}^2 p(x) \mathrm{d}x \leq \sup_n Q(f_n) = C_1 \]
and hence $Q(h) \leq C_1$ by the Monotone Convergence Theorem. Therefore $h \in W^{1,2}$.

Let $\eta > 0$ be given. Since $(f_n)$ is a Cauchy sequence in $W^{1,2}$, there exists $N \in \setN$ such that
\[\int_0^1 \mod{f_{n_1}-f_{n_2}}^2 w(x) \mathrm{d}x < \eta/6\]
and
\[\int_0^1 \mod{f'_{n_1}-f'_{n_2}}^2 p(x) \mathrm{d}x < \eta/6\]
for all $n_1,n_2 \geq N$. By the Dominated Convergence Theorem,
\[ \int_0^{1/m} \mod{f_N - h}^2 w(x) \mathrm{d}x + \int_{1-1/m}^1 \mod{f_N - h}^2 w(x) \mathrm{d}x \to 0\]
as $m \to \infty$. Hence there exists $M \in \setN$ such that
\[ \int_0^{1/m} \mod{f_N - h}^2 w(x) \mathrm{d}x + \int_{1-1/m}^1 \mod{f_N - h}^2 w(x) \mathrm{d}x < \eta/6 \]
for all $m \geq M$. If $n \geq N$ then
\begin{eqnarray*}
\int_{(0,1/m) \cup (1-1/m,1)} \mod{f_n - h}^2 w(x) \mathrm{d}x	& \leq & 2\int_{(0,1/m) \cup (1-1/m,1)} \mod{f_n - f_N}^2 w(x) \mathrm{d}x\\
																																&   & + 2\int_{(0,1/m) \cup (1-1/m,1)} \mod{f_N - h}^2 w(x) \mathrm{d}x\\
																																& \leq & 2 \int_0^1 \mod{f_n - f_N}^2 w(x) \mathrm{d}x\\
																																&   & + 2\int_{(0,1/m) \cup (1-1/m,1)} \mod{f_N - h}^2 w(x) \mathrm{d}x\\
																																& < & 2\eta/6 + 2\eta/6\\
																																& = & 2\eta/3
\end{eqnarray*}
for all $m \geq M$. Since $w$ is bounded on $(1/M,1-1/M)$,
\begin{eqnarray*}
\int_{1/M}^{1-1/M} \mod{f_n - h}^2 w(x) \mathrm{d}x	& \leq &	\sup_{y \in (1/M,1-1/M)} w(y) \int_{1/M}^{1-1/M} \mod{f_n - h}^2 \mathrm{d}x \to 0\\
																										& = &		\sup_{y \in (1/M,1-1/M)} w(y) \int_{1/M}^{1-1/M} \mod{f_n - h_{1/M}}^2 \mathrm{d}x \to 0
\end{eqnarray*}
as $n \to \infty$. Hence there exists $N'$ such that
\[\int_{1/M}^{1-1/M}\mod{f_n-h}^2 w(x) \mathrm{d}x < \eta/3\]
for all $n \geq N'$. Therefore
\[\int_0^1 \mod{f_n-h}^2 w(x) \mathrm{d}x < \eta\]
for all $n \geq \max(N,N')$, so $\norm{f_n-h}_w^2 \to 0$ as $n \to \infty$. Similarly $Q(f_n-h) \to 0$ as $n \to \infty$. Hence $f_n \stackrel{W^{1,2}}{\longrightarrow} h \in W^{1,2}$ as $n \to \infty$. \qed
\end{proof}

\begin{theorem}\label{thm:cts}
If $f \in W^{1,2}$ then $f$ is continuous on $[0,1)$ and $f(0)=0$.
\end{theorem}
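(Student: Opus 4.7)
The plan is to establish an explicit modulus-of-continuity estimate near $0$ via a Cauchy--Schwarz splitting, then use the non-integrability of $w$ at $0$ to force the limit value to vanish.

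Since $W^{1,2} \subset \{f \in \mathcal{C}(0,1) : f' \in L^1_{\mathrm{loc}}\}$, every $f \in W^{1,2}$ is already continuous on $(0,1)$ and absolutely continuous on compact subintervals, so the fundamental theorem of calculus is available. For $0 < a < b < 1$, I would split $|f'| = p^{1/2}|f'| \cdot p^{-1/2}$ and apply Cauchy--Schwarz to obtain
\[
|f(b)-f(a)| \;\le\; \int_a^b |f'(t)|\,\mathrm{d}t \;\le\; \left(\int_a^b p|f'|^2\,\mathrm{d}t\right)^{1/2}\!\left(\int_a^b p(t)^{-1}\,\mathrm{d}t\right)^{1/2} \;\le\; Q(f)^{1/2}\bigl(\gamma(b)-\gamma(a)\bigr)^{1/2}.
\]
Since $p$ is continuous and positive on $[0,1)$ with $p(0)=1$, the function $\gamma$ is continuous on $[0,1)$ with $\gamma(0)=0$. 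Applied to any sequence $x_n\to 0^+$, this inequality yields that $(f(x_n))$ is Cauchy, so the limit $L := \lim_{x\to 0^+} f(x)$ exists in $\setC$.

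It then remains to show $L=0$. I would argue by contradiction: if $L\ne 0$, pick $\delta \in (0,1)$ so that $|f(x)|\ge |L|/2$ for all $x\in(0,\delta)$. Near $0$ the weight $w(x) = x^{-1}(1-x)^{1/\varepsilon}(x+1)^{-1/\varepsilon}$ is bounded below by a constant multiple of $x^{-1}$, so $\int_0^{\delta} w(x)\,\mathrm{d}x = +\infty$ and hence $\int_0^{\delta}|f|^2 w\,\mathrm{d}x = +\infty$, contradicting $f\in \Ltwow$. Therefore $L=0$, and assigning $f(0):=0$ extends $f$ continuously to $[0,1)$.

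There is no substantive obstacle; the whole argument hinges on the complementary pair of facts $\int_0^{\delta} p^{-1}\,\mathrm{d}t < \infty$ and $\int_0^{\delta} w\,\mathrm{d}t = \infty$ for small $\delta$. The first gives existence of a boundary limit, the second forces the limit to vanish. (The same method will fail at $x=1$, since $\gamma(x)\to \infty$ there, which is consistent with the statement only asserting continuity on $[0,1)$.)
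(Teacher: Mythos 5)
Your proposal is correct and follows essentially the same route as the paper: Cauchy--Schwarz with the splitting $|f'| = p^{1/2}|f'|\cdot p^{-1/2}$ gives $f' \in L^1(0,\alpha)$ and hence continuity up to $0$, and the divergence of $\int_0^{\delta} w(x)\,\mathrm{d}x$ forces $f(0)=0$ by contradiction with $f \in \Ltwow$. The only cosmetic difference is that you make the modulus of continuity $Q(f)^{1/2}(\gamma(b)-\gamma(a))^{1/2}$ explicit, whereas the paper just cites $f' \in L^1(0,\alpha)$.
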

\begin{proof}
For any $\alpha \in (0,1)$, $p^{-1/2} \in L^2(0,\alpha)$ and $p^{1/2}f' \in L^2(0,\alpha)$. Hence $f' \in L^1(0,\alpha)$, so $f$ is continuous on $[0,\alpha]$. Since $\alpha$ is arbitrary in $(0,1)$, $f$ is continuous on $[0,1)$.

Suppose that $f(0) \neq 0$. Then $\mod{f}^2 \geq c$ on $[0,\delta]$ for some $\delta > 0$ and some $c > 0$. Then
\[\int_0^{\delta} w(x) \mod{f(x)}^2 \mathrm{d}x \geq c \int_0^{\delta} w(x) \mathrm{d}x = \infty.\]
However,
\[\int_0^{\delta} w(x) \mod{f(x)}^2 \mathrm{d}x = \norm{f}_w^2 < \infty.\]
This is a contradiction, so $f(0) = 0$. \qed
\end{proof}

\begin{theorem}\label{thm:core}
The space $\mathcal{C}_c^{\infty}(0,1)$ is dense in $W^{1,2}$.
\end{theorem}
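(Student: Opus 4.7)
The plan is to approximate an arbitrary $f \in W^{1,2}$ by $\mathcal{C}_c^{\infty}(0,1)$ functions in three successive reductions: first truncate the values of $f$ so that $f$ becomes bounded, then multiply by a smooth cutoff so that the support is a compact subset of $(0,1)$, and finally mollify by convolution. For the first reduction, set $\tau_k(t) = (\mathrm{sgn}\, t)\min(\mod{t}, k)$ and let $f_k = \tau_k \circ f$. Since $\tau_k$ is $1$-Lipschitz with $\tau_k(0) = 0$, one has $f_k \in W^{1,2}$ with $\mod{f_k} \leq \mod{f}$, $f_k(0) = 0$ and $f_k' = f' \chi_{\{\mod{f} < k\}}$ almost everywhere. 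The Dominated Convergence Theorem yields $\norm{f - f_k}_w \to 0$ and $Q(f - f_k) \to 0$ as $k \to \infty$, reducing matters to the case when $f$ is bounded.

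The second reduction is where the real work happens. Assuming $\mod{f} \leq M$, pick smooth cutoffs $\chi_n$ on $[0,1]$ satisfying $\chi_n \equiv 1$ on $[2/n, 1-2/n]$, $\supp{\chi_n} \subseteq [1/n, 1-1/n]$, and $\mod{\chi_n'} \leq Cn$. Standard DCT arguments handle $\norm{f - \chi_n f}_w$ and the $(1-\chi_n)^2 \mod{f'}^2$ contribution to the form, leaving only $\int p \mod{\chi_n'}^2 \mod{f}^2 \mathrm{d}x$, which splits into contributions from neighbourhoods of $x=0$ and $x=1$. Near $0$, since $f(0)=0$ and $p$ is bounded there, Cauchy--Schwarz gives $\mod{f(x)}^2 \leq \gamma(x) \int_0^x p \mod{f'}^2 \mathrm{d}t \leq Cx \int_0^x p \mod{f'}^2 \mathrm{d}t$; this leads to an estimate proportional to $\int_0^{2/n} p \mod{f'}^2 \mathrm{d}t$, which vanishes by integrability. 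Near $1$, the bound $\mod{f} \leq M$ combined with $p(x) = O\bigl((1-x)^{1+1/\varepsilon}\bigr)$ yields an estimate of order $M^2 n^{-1/\varepsilon} \to 0$. Hence $\chi_n f \to f$ in $W^{1,2}$.

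The third reduction is routine: $\chi_n f$ has compact support in $(0,1)$, and on that support $p$ and $w$ are bounded between positive constants, so the $W^{1,2}$ norm is equivalent to the classical $H^1$ norm there. Standard convolution with a mollifier of small enough radius produces a $\mathcal{C}_c^{\infty}(0,1)$ approximation converging in $H^1$ and therefore in $W^{1,2}$; a diagonal argument across the three parameters then completes the proof. The main obstacle is the right endpoint $x=1$, which is in the limit-point case, so $W^{1,2}$ imposes no boundary condition there and a general $f$ may be unbounded as $x \to 1$. Without first truncating $f$ to be bounded, the estimate for $\int p \mod{\chi_n'}^2 \mod{f}^2 \mathrm{d}x$ near $x = 1$ fails: the only available growth bound $\mod{f(x)}^2 \leq C\bigl(1 + (1-x)^{-1/\varepsilon} Q(f)\bigr)$ combined with $p(x) = O((1-x)^{1+1/\varepsilon})$ gives a term proportional to $Q(f)$ that does not vanish. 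The pointwise truncation in the first step is precisely what tames this and makes the remaining estimate routine.
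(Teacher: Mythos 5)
Your proof is correct, and it reaches the same destination by a recognisably different set of devices. The paper's proof never truncates the values of $f$: it first builds $f_{\delta}$ by setting $f$ to zero on $[0,\delta]$, inserting the rescaled copy $f(2(x-\delta))$ on $[\delta,2\delta]$, and \emph{freezing} $f$ at the constant value $f(1-\delta)$ on $[1-\delta,1]$; it then ramps that constant linearly down to zero over $[1-2\eta,1-\eta]$ and finally mollifies, exactly as you do in your third step. The role played by your truncation $\tau_k$ is played in the paper by the freezing step together with the bound $\mod{f(1-\delta)}^2 \leq c\,Q(f)\,\delta^{-1/\varepsilon}$, which when multiplied by $w(1-\delta) \leq c'\delta^{1/\varepsilon}$ and the interval length $\delta$ gives $O(\delta)$ --- so the paper gets away without ever making $f$ globally bounded, at the price of a piecewise-defined comparison function rather than a clean cutoff multiplication. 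Your diagnosis of the obstruction is exactly right: with only the a priori bound $\mod{f(x)}^2 \leq cQ(f)(1-x)^{-1/\varepsilon}$, the term $\int p\mod{\chi_n'}^2\mod{f}^2$ near $x=1$ is $O(Q(f))$ and does not vanish, and $W^{1,2}$ does contain functions unbounded at $1$ (e.g.\ with $f' = p^{-1/2}(1-x)^{-1/2}\mod{\log(1-x)}^{-1}$ near $1$), so the preliminary truncation is genuinely necessary for the multiply-by-a-cutoff strategy. Two cosmetic points: since elements of $\Ltwow$ are complex-valued you should use the radial truncation $t \mapsto t\min(1,k/\mod{t})$ (or truncate real and imaginary parts separately), for which the Lipschitz chain rule still gives $\mod{f_k'} \leq \mod{f'}$ with equality off $\{\mod{f}\geq k\}$; and the endpoint $x=1$ is limit-point only for $\varepsilon \leq 1$ (it is limit-circle for $1 < \varepsilon < 2$), though the substantive claim you actually use --- that the form domain imposes no boundary condition at $1$ and admits unbounded elements --- holds throughout $0 < \varepsilon < 2$.
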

\begin{proof}
For all $\delta$ such that $0 < \delta < 1/3$, define $f_{\delta}$ by
\[ f_{\delta}(x) =	\left\{ 	\begin{array}{cc}
																0								&	0 \leq x \leq \delta\\
																f(2(x-\delta))	&	\delta \leq x \leq 2\delta\\
																f(x)						&	2\delta \leq x \leq 1-\delta\\
																f(1-\delta)			&	1-\delta \leq x \leq 1.
															\end{array}
										\right. \]
Then $f_{\delta}|_{(\delta,1)} \in H^1(\delta,1)$ and $f_{\delta}(\delta)=0$. It follows from the boundedness of $p$ and $w$ on $[\delta,1]$ that $f_{\delta} \in W^{1,2}$. We have
\begin{eqnarray}
\int_0^1 \mod{f(x)-f_{\delta}(x)}^2 w(x) \mathrm{d}x	& = & \int_0^{\delta} \mod{f(x)}^2 w(x) \mathrm{d}x\\
																											&   & + \int_{\delta}^{2\delta} \mod{f(x)-f(2(x-\delta))}^2 w(x) \mathrm{d}x\\
																											&   &	+ \int_{1-\delta}^1 \mod{f(x)-f(1-\delta)}^2 w(x) \mathrm{d}x \label{eq:endint}
\end{eqnarray}
for all $\delta \in (0,1/3)$. That the first integral on the right hand side converges to zero as $\delta \to 0$ is elementary. On some neighbourhood of $0$, $w$ is monotone decreasing. Thus, if we take $\delta$ sufficiently small, then $w(x/2+\delta) \leq w(x)$ for all $x \in (0,2\delta)$. Hence, for all small enough $\delta$,
\begin{eqnarray*}
\int_{\delta}^{2\delta} \mod{f(x)-f(2(x-\delta))}^2 w(x) \mathrm{d}x	& \leq &	2 \int_{\delta}^{2\delta} \mod{f(2(x-\delta))}^2 w(x) \mathrm{d}x + 2 \int_{\delta}^{2\delta} \mod{f(x)}^2 w(x) \mathrm{d}x\\
																																			& =	&			\int_0^{2\delta} \mod{f(x)}^2 w(x/2+\delta) \mathrm{d}x + 2 \int_{\delta}^{2\delta} \mod{f(x)}^2 w(x) \mathrm{d}x\\
																																			& \leq &	\int_0^{2\delta} \mod{f(x)}^2 w(x) \mathrm{d}x + 2 \int_{\delta}^{2\delta} \mod{fx)}^2 w(x) \mathrm{d}x\\
																																			& \to &		0
\end{eqnarray*}
as $\delta \to 0$. In order to with integral \eqnref{eq:endint}, we must first obtain an estimate on $f$ near $1$. By Theorem \ref{thm:cts}, $f(0) = 0$. Hence
\begin{eqnarray*}
\mod{f(x)}	& \leq &	\int_0^x \mod{f'(s)} \mathrm{d}s\\
						& \leq &	\left(\int_0^x p(s)^{-1} \mathrm{d}s\right)^{1/2}\left(\int_0^x p(s)\mod{f'(s)}^2\mathrm{d}s \right)^{1/2}
\end{eqnarray*}
for all $x \in (0,1)$, so
\begin{eqnarray*}
\mod{f(x)}^2	& \leq &	Q(f) \int_0^x p(s)^{-1} \mathrm{d}s\\
							& \leq &	c Q(f) (1-x)^{-1/\varepsilon}
\end{eqnarray*}
for some constant $c$. Now, since $w$ is monotone decreasing in a neighbourhood of $1$ and there exists a constant $c'$ such that $w(s) \leq c' (1-s)^{1/\varepsilon}$ for all $s > 1/2$,
\begin{eqnarray*}
\int_{1-\delta}^1 \mod{f(x) - f(1-\delta)}^2 w(x) \mathrm{d}x	& \leq &	2 \int_{1-\delta}^1 \mod{f(x)}^2 w(x) \mathrm{d}x\\
																															&      &	+ 2\int_{1-\delta}^1 \mod{f(1-\delta)}^2 w(x) \mathrm{d}x\\
																															& \leq &	2 \int_{1-\delta}^1 \mod{f(x)}^2 w(x) \mathrm{d}x\\
																															&      &	+ 2\int_{1-\delta}^1 \mod{f(1-\delta)}^2 w(1-\delta) \mathrm{d}x\\
																															& \leq &	2 \int_{1-\delta}^1 \mod{f(x)}^2 w(x) \mathrm{d}x\\
																															&      &	+ 2cc' Q(f) \delta (1-\delta)^{-1/\varepsilon} (1-\delta)^{1/\varepsilon}\\
																															& \to  &	0
\end{eqnarray*}
as $\delta \to 0$. Therefore $\norm{f - f_{\delta}} \to 0$ as $\delta \to 0$. Similarly, we may show that $Q(f-f_{\delta}) \to 0$ as $\delta \to 0$. Hence $f_{\delta} \stackrel{W^{1,2}}{\longrightarrow} f$ as $\delta \to 0$.

Next we define $f_{\delta,\eta}$ by
\[ f_{\delta,\eta}(x) = 	\left\{	\begin{array}{cc}
																		f_{\delta}(x)																								&	0 \leq x \leq 1-2\eta\\
																		f_{\delta}(1-2\eta)\left(1-\frac{x-(1-2\eta)}{\eta}\right)	& 1-2\eta \leq x \leq 1-\eta\\
																		0																														& 1-\eta \leq x \leq 1
																	\end{array}
													\right. \]
for all $\delta, \eta < 1/3$. Then $f_{\delta,\eta}|_{(\delta,1-\eta)} \in H^1(\delta,1-\eta)$, $f(x) = 0$ on $(0,\delta) \cup (1-\eta,1)$, and $\lim_{x \to \delta+} f_{\delta,\eta}(x) = \lim_{x \to 1-\eta-} f_{\delta,\eta}(x) = 0$. Hence $f_{\delta,\eta} \in H^1(0,1)$. By Theorem 2 of Section 5.5 of \cite{evans}, there exists a sequence $(f_{\delta,\eta,n})_{n \in \setN}$ in $\mathcal{C}_c^{\infty}(\delta,1-\eta) \subset \mathcal{C}_c^{\infty}(0,1)$ such that $f_{\delta,\eta,n} \stackrel{H^1}{\longrightarrow} f_{\delta,\eta}$ as $n \to \infty$. Since $p$ and $w$ are bounded on $[\delta,1-\eta]$, it follows that $f_{\delta,\eta,n} \stackrel{W^{1,2}}{\longrightarrow} f_{\delta,\eta}$ as $n \to \infty$.

It only remains to show that we may approximate $f_{\delta}$ by $f_{\delta,\eta}$. For all $\delta \in (0,1/3)$,
\begin{eqnarray}
\int_0^1 \mod{f_{\delta}(x)-f_{\delta,\eta}(x)}^2 w(x)\mathrm{d}x	& = &	\int_{1-2\eta}^{1-\eta} \mod{f_{\delta}(x)-f_{\delta,\eta}(x)}^2 w(x)\mathrm{d}x\\
																																	&   & + \int_{1-\eta}^1 \mod{f_{\delta}(x)}^2 w(x) \mathrm{d}x.\label{eq:rightend}
\end{eqnarray}
Clearly integral \eqnref{eq:rightend} converges to zero as $\eta \to 0$ and
\begin{eqnarray*}
\int_{1-2\eta}^{1-\eta} \mod{f_{\delta}(x)-f_{\delta,\eta}(x)}^2 w(x)\mathrm{d}x	& \leq &	2\int_{1-2\eta}^{1-\eta} \mod{f_{\delta}(x)}^2 w(x)\mathrm{d}x\\
																																									&      &	+2\int_{1-2\eta}^{1-\eta} \mod{f_{\delta,\eta}(x)}^2 w(x)\mathrm{d}x\\
																																									& \leq &	2\int_{1-2\eta}^{1-\eta} \mod{f_{\delta}(x)}^2 w(x)\mathrm{d}x\\
																																									&      &	+2\int_{1-2\eta}^{1-\eta} \mod{f_{\delta}(1-2\eta)}^2 w(x)\mathrm{d}x\\
																																									& \leq &	2\int_{1-2\eta}^{1-\eta} \mod{f_{\delta}(x)}^2 w(x)\mathrm{d}x\\
																																									&      &	+2\norm{f_{\delta}}_{L^{\infty}} \int_{1-2\eta}^{1-\eta} w(x) \mathrm{d}x\\
																																									& \to	 & 0
\end{eqnarray*}
as $\eta \to 0$. Hence $\norm{f_{\delta}-f_{\delta,\eta}} \to 0$ as $\eta \to 0$. If $0 < \eta < \delta$ then
\begin{eqnarray*}
\int_0^1 \mod{f'_{\delta}(x) - f'_{\delta,\eta}(x)}^2 p(x) \mathrm{d}x	& = &			\int_{1-2\eta}^{1-\eta} \mod{f'_{\delta}(x) - f'_{\delta,\eta}(x)}^2 p(x) \mathrm{d}x\\
																																				&   &			+\int_{1-\eta}^1 \mod{f'_{\delta}(x)}^2 p(x) \mathrm{d}x\\
																																				& \leq &	2\int_{1-2\eta}^{1-\eta} \mod{f'_{\delta}(x)}^2 p(x) \mathrm{d}x\\
																																				&   &			+2\int_{1-2\eta}^{1-\eta} \mod{f'_{\delta,\eta}(x)}^2 p(x) \mathrm{d}x\\
																																				& \leq &	2\int_{1-2\eta}^{1-\eta} \mod{f'_{\delta}(x)}^2 p(x) \mathrm{d}x\\
																																				&   &			+2\eta^{-2}\mod{f_{\delta}(1-2\eta)}^2 \int_{1-2\eta}^{1-\eta} p(x) \mathrm{d}x\\
																																				& \leq &	2\int_{1-2\eta}^{1-\eta} \mod{f'_{\delta}(x)}^2 p(x) \mathrm{d}x\\
																																				&   &			+2\eta^{-1}\norm{f_{\delta}}_{L^{\infty}}^2 \sup_{1-2\eta\leq x \leq1} p(x)\\
																																				& \leq &	2\int_{1-2\eta}^{1-\eta} \mod{f'_{\delta}(x)}^2 p(x) \mathrm{d}x\\
																																				&   &			+2\eta^{-1}\norm{f_{\delta}}_{L^{\infty}}^2 (2\eta)^{1+1/\varepsilon}2^{1-1/\varepsilon}\\
																																				& = &			2\int_{1-2\eta}^{1-\eta} \mod{f'_{\delta}(x)}^2 p(x) \mathrm{d}x\\
																																				&   &			+8\eta^{1/\varepsilon}\norm{f_{\delta}}_{L^{\infty}}^2\\
																																				& \to &		0
\end{eqnarray*}
as $\eta \to 0$. Therefore $f_{\delta,\eta} \stackrel{W^{1,2}}{\longrightarrow} f_{\delta}$ as $\eta \to 0$. \qed
\end{proof}

\begin{corollary}
The quadratic form associated with $\bar{L}$ is $Q$.
\end{corollary}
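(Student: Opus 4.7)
The plan is to invoke the first representation theorem for closed, non-negative, symmetric quadratic forms. First I would verify that $(Q, W^{1,2})$ is a closed, densely defined, non-negative, symmetric form on $\Ltwow$: non-negativity and symmetry are immediate from the definition; density follows because $\mathcal{C}_c^{\infty}(0,1) \subset W^{1,2}$ is already dense in $\Ltwow$; and closedness is exactly the statement that $W^{1,2}$ is complete under $\norm{\cdot}_{W^{1,2}}$, which was established just above. Let $L_Q$ denote the unique non-negative self-adjoint operator associated with this closed form. The corollary reduces to proving $L_Q = \bar{L}$.

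The key step is to establish $L \subset L_Q$. For $f \in \dom{L}$, membership $f \in W^{1,2}$ is immediate because $f \in \smooth$ and $p$ is bounded on $[0,1]$, so $Q(f) < \infty$. Next, for any $g \in \mathcal{C}_c^{\infty}(0,1)$ the compact support of $g$ makes the boundary terms in integration by parts vanish, yielding
\[ Q(g,f) = \int_0^1 p(x) g'(x) \conj{f'(x)} \, \mathrm{d}x = -\int_0^1 g(x) \conj{(pf')(x)'} \, \mathrm{d}x = \ip{g}{Lf}_w. \]
By Theorem \ref{thm:core}, $\mathcal{C}_c^{\infty}(0,1)$ is dense in $W^{1,2}$, so I would extend this identity to arbitrary $g \in W^{1,2}$ by continuity: the left-hand side is continuous in $g$ with respect to $\norm{\cdot}_{W^{1,2}}$ by Cauchy--Schwarz on the form inner product, while the right-hand side is continuous in the weaker norm $\norm{\cdot}_w$ by Cauchy--Schwarz in $\Ltwow$. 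The resulting identity $Q(g,f) = \ip{g}{Lf}_w$, valid for all $g \in W^{1,2}$, exhibits $g \mapsto Q(g,f)$ as a bounded functional in the $\norm{\cdot}_w$-topology on the form domain, which is precisely the defining condition for membership in $\dom{L_Q}$; the representation theorem then gives $f \in \dom{L_Q}$ and $L_Q f = Lf$.

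Given $L \subset L_Q$, closedness of $L_Q$ (it is self-adjoint) forces $\bar{L} \subset L_Q$, and then self-adjointness of $\bar{L}$ supplied by Theorem \ref{thm:sa} forces the reverse inclusion, so $\bar{L} = L_Q$ and $Q$ is indeed the quadratic form of $\bar{L}$. I expect the only real subtlety to be the density extension in the middle step: what the first representation theorem actually demands is continuity of $Q(\cdot, f)$ in the ambient $\norm{\cdot}_w$-norm on the form domain, not merely in $\norm{\cdot}_{W^{1,2}}$, and this weaker continuity is supplied precisely through the identification with $\ip{\cdot}{Lf}_w$. The form-core Theorem \ref{thm:core} therefore does the essential work; without it one could only identify the closure of $Q|_{\mathcal{C}_c^{\infty}(0,1)}$ as the form of some self-adjoint extension of $L_c$, whereas with it one identifies the full closed form $(Q, W^{1,2})$ with the form of $\bar{L}$ directly.
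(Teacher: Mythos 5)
Your argument is correct, but it runs in the opposite direction from the paper's. The paper works at the level of forms: it takes the form $\tilde{Q'}(f,g)=\ip{Lf}{g}_w$ on $\dom{L}$, invokes the Friedrichs-type closability theorem to say its closure is the form of some self-adjoint extension of $L$ (necessarily $\bar{L}$ by Theorem \ref{thm:sa}), and then identifies that closure with $Q$ by sandwiching: $\mathcal{C}_c^{\infty}(0,1)\subseteq\dom{\tilde{Q}}\subseteq\dom{Q}=\overline{\mathcal{C}_c^{\infty}(0,1)}$, using closedness of $Q$ and Theorem \ref{thm:core}. You instead work at the level of operators: start from the closed form $(Q,W^{1,2})$, produce its associated operator $L_Q$ via the first representation theorem, prove the operator inclusion $L\subset L_Q$ by integration by parts against $g\in\mathcal{C}_c^{\infty}(0,1)$ plus the density supplied by Theorem \ref{thm:core}, and conclude $\bar{L}=L_Q$ from self-adjointness of both sides. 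The ingredients are identical (completeness of $W^{1,2}$, Theorem \ref{thm:core}, Theorem \ref{thm:sa}, and the integration-by-parts identity), so neither route is more economical; your version makes the role of the representation theorem and the core condition more explicit, while the paper's avoids verifying the weak-continuity hypothesis by never leaving the category of forms. Two cosmetic points: your displayed $\conj{(pf')(x)'}$ should read $\conj{(pf')'(x)}$, and you use the sesquilinear form $Q(g,f)$, which the paper defines only implicitly via polarisation of the quadratic form $Q(f)$.
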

\begin{proof}
Let $\tilde{Q'}$ be the form defined on $\dom{L}$ by $\tilde{Q'}(f,g) = \ip{Lf}{g}_w$. Then by Theorem 4.4.5 of \cite{stdo}, $\tilde{Q'}$ is closable and its closure is associated with a self-adjoint extension of $L$, which must be $\bar{L}$, since $L$ is essentially-self adjoint. For $f \in \dom{L}$,
\begin{eqnarray*}
\tilde{Q}(f) 	& = & \ip{Lf}{f}_w\\
							& = & -\int_0^1 (pf')'(x) \conj{f(x)} \mathrm{d}x\\
							& = & \int_0^1 p \mod{f'}^2 \mathrm{d}x\\
							& = & Q(f)
\end{eqnarray*}
so $Q$ is an extension of $\tilde{Q}$. By the above theorem,
\[ \mathcal{C}_c^{\infty}(0,1) \subseteq \dom{\tilde{Q}} \subseteq \dom{Q} = \overline{\mathcal{C}_c^{\infty}(0,1)} \]
and hence $Q$ is the closure of $\tilde{Q}$, as required.
\qed
\end{proof}

\subsection{Transformation to a Schr\"odinger Operator}

We next use a suitable change of variables to convert $\bar{L}$ into a Schr\"odinger operator on a certain space $L^2(0,\beta)$. This allows us to use the extensive range of standard techniques available for controlling the eigenvalues of Schr\"odinger operators.

\begin{theorem}\label{thm:schr}
Define
\[\psi(t) = \int_0^t{ \sqrt{\frac{w(y)}{p(y)}}\mathrm{d}y}\]
for $t \in [0,1]$, and let $\beta = \psi(1)$. Then $\beta < \infty$ and $\psi:[0,1]\to[0,\beta]$ is invertible. Let $\phi = \psi^{-1}$ and define
\begin{eqnarray*}
c(s) & = & \left(w(\phi(s))p(\phi(s))\right)^{-1/4}\\
V(s) & = & -c(s)c''(s)\frac{p(\phi(s))}{\phi'(s)} - c(s)c'(s)\dd{s}\frac{p(\phi(s))}{\phi'(s)}\\
\hat{Q}_c(g) & = & \int_0^{\beta} \left\{\mod{g'(s)}^2 + V(s)\mod{g(s)}^2 \right\}\mathrm{d}s
\end{eqnarray*}
for all $g \in \mathcal{C}_c^{\infty}(0,\beta) \subset L^2(0,\beta)$. Then $\hat{Q}_c$ is a closable quadratic form and its closure $\hat{Q}$ is associated with a self-adjoint operator $H$, which is unitarily equivalent to $\bar{L}$. The potential $V$ is smooth on $(0,1)$, $V(s) \sim \frac{3}{4}s^{-2}$ as $s \to 0+$ and $V(s) \sim \left(\frac{1}{\varepsilon^2}-\frac{1}{4}\right)(\beta-s)^{-2}$ as $s \to \beta-$.
\end{theorem}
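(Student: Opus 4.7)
The plan is to set up an explicit Liouville transformation realizing $H$ as the unitary conjugate of $\bar L$, and then read off the endpoint asymptotics of the potential $V$ by leading-order analysis of $c$.

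First, using the explicit formulas for $w$ and $p$ one verifies the clean identity
\[ \frac{w(y)}{p(y)} = y^{-1}(1-y)^{-1}(1+y)^{-1}, \]
so $\sqrt{w/p}$ behaves like $y^{-1/2}$ near $0$ and like $(1-y)^{-1/2}$ near $1$, both integrable; hence $\beta<\infty$. Since $\psi'>0$ on $(0,1)$, the map $\psi:[0,1]\to[0,\beta]$ is a continuous bijection, smooth on $(0,1)$, so $\phi=\psi^{-1}$ is smooth on $(0,\beta)$ with $\phi'(s)=\sqrt{p(\phi(s))/w(\phi(s))}$. In particular $p(\phi(s))/\phi'(s)=\sqrt{p(\phi(s))w(\phi(s))}=c(s)^{-2}$, which simplifies the stated potential to $V=-c^{-1}c''+2c^{-2}(c')^2$.

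Next, define $U:\Ltwow\to L^2(0,\beta)$ by $(Uu)(s)=c(s)^{-1}u(\phi(s))$. A change of variable using $\phi'(s)w(\phi(s))=c(s)^{-2}$ shows $U$ is unitary, and $U$ restricts to a bijection between $\mathcal{C}_c^{\infty}(0,1)$ and $\mathcal{C}_c^{\infty}(0,\beta)$ because $\phi$ and $c$ are smooth and strictly positive on $(0,\beta)$. For $u\in\mathcal{C}_c^{\infty}(0,1)$ with $g=Uu$, differentiating $u(t)=c(\psi(t))g(\psi(t))$ and changing variable yields
\[ Q(u)=\int_0^{\beta} c(s)^{-2}\mod{c'(s)g(s)+c(s)g'(s)}^2\,ds. \]
Expanding the modulus squared, writing the cross term as $c^{-1}c'\dd{s}\mod{g}^2$ and integrating by parts (boundary terms vanish since $g$ is compactly supported in $(0,\beta)$) produces exactly $\hat Q_c(g)=\int_0^{\beta}(\mod{g'}^2+V\mod{g}^2)\,ds$. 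By the preceding corollary, $Q$ is the form associated with $\bar L$, and by Theorem \ref{thm:core} the space $\mathcal{C}_c^{\infty}(0,1)$ is a form core for $Q$. Hence the transported form $\hat Q_c$ is closable, its closure $\hat Q$ is the form of the self-adjoint operator $H=U\bar L U^{-1}$, and $H$ is unitarily equivalent to $\bar L$. Smoothness of $V$ on $(0,\beta)$ is immediate from $V=-c^{-1}c''+2c^{-2}(c')^2$ and smoothness of $w$, $p$ on $(0,1)$.

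The endpoint asymptotics, the most computational step, are obtained by reading off leading orders of $\psi$, $\phi$ and $c$ from those of $w$ and $p$. Near $0$ we have $w(t)\sim t^{-1}$ and $p(0)=1$, so $\psi(t)\sim 2\sqrt t$, $\phi(s)\sim s^2/4$, and $c(s)\sim 2^{-1/2}s^{1/2}$; substituting into $V=2(c')^2/c^2-c''/c$ yields the leading coefficient $3/4$. Near $1$ we have $w(t)\sim C_1(1-t)^{1/\varepsilon}$ and $p(t)\sim C_2(1-t)^{1+1/\varepsilon}$, so $\beta-s\sim 2\sqrt{1-t}$ and $c(s)\sim K(\beta-s)^{\alpha}$ with $\alpha=-1/\varepsilon-1/2$. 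Since an ansatz $c\sim K(\beta-s)^{\alpha}$ contributes $\alpha(\alpha+1)(\beta-s)^{-2}$ to $V$, and $\alpha(\alpha+1)=1/\varepsilon^2-1/4$, the claimed behaviour at $\beta-$ follows. The only delicate part is the Liouville computation and the integration by parts in the preceding paragraph; once these are in place, the asymptotics are routine leading-order substitutions.
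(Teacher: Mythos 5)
Your proposal follows essentially the same route as the paper: the Liouville change of variables $\psi$, the unitary $(Uf)(s)=c(s)^{-1}f(\phi(s))$, transport of the form $Q$ to $\hat{Q}_c$ by expanding $c^{-2}\mod{c'g+cg'}^2$ and integrating the cross term by parts, and the form-core argument (Theorem \ref{thm:core}) to identify the closure $\hat{Q}$ with the form of $U\bar{L}U^{-1}$. Your observation that $p(\phi(s))/\phi'(s)=c(s)^{-2}$, so that $V=-c''/c+2(c'/c)^2$, is a genuine streamlining: the paper instead carries the factor $p(\phi(s))/\phi'(s)$ through and has to compute the asymptotics of $p(\phi(s))$, $p'(\phi(s))$, $\phi'$ and $\phi''$ separately (Corollary \ref{cor:pasym}, Lemma \ref{lem:phiasym}) before assembling $V$.

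The one step you assert rather than prove is the endpoint behaviour of $V$. An asymptotic relation $c(s)\sim Ks^{\alpha}$ cannot in general be differentiated, so "an ansatz $c\sim K(\beta-s)^{\alpha}$ contributes $\alpha(\alpha+1)(\beta-s)^{-2}$" is a heuristic until you separately establish $c'\sim K\alpha s^{\alpha-1}$ and $c''\sim K\alpha(\alpha-1)s^{\alpha-2}$ (and likewise at $\beta-$). The paper supplies exactly this in Appendix \ref{app:asymptotics}: it writes $c=k\circ\phi$ with $k(z)=z^{1/4}k_0(z)$, $k_0$ analytic near $0$ (and an analogous factorisation near $1$), so the expansions of $k$, $k'$, $k''$ are term-by-term differentiable power series times explicit powers, and it uses the identity $\phi'(s)=\phi(s)^{1/2}(1-\phi(s))^{1/2}(1+\phi(s))^{1/2}$ to get honest asymptotics for $\phi'$ and $\phi''$ from those of $\phi$. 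Once that is in place, your leading-order substitution gives $\alpha(\alpha+1)=3/4$ at $0$ and $1/\varepsilon^2-1/4$ at $\beta-$, in agreement with the paper; everything else in your argument is sound.
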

\begin{proof}
We have
\begin{eqnarray*}
\beta	& = 		& \int_0^1 y^{-1/2}(1-y)^{-1/2}(1+y)^{-1/2} \mathrm{d}y\\
			& \leq 	& \sqrt{2}\int_0^{1/2} y^{-1/2} \mathrm{d}y + \sqrt{2}\int_{1/2}^1(1-y)^{-1/2} \mathrm{d}y\\
			& = 		& 2\sqrt{2}\int_0^{1/2} y^{-1/2} \mathrm{d}y = 4.
\end{eqnarray*}
Since $w(t)^{1/2}p(t)^{-1/2}$ is smooth and positive for all $t \in (0,1)$, it is immediate from its definition that $\psi$ is smooth on $(0,1)$ and continuous and strictly monotone increasing on $[0,1]$. Thus $\psi$ is injective. Since $\psi$ is continuous, $\psi(0) = 0$ and $\psi(1) = \beta$, $\psi$ must be surjective. Hence $\psi$ is invertible. It is easy to show that $\phi$ and $\phi'$ are smooth and non-zero.

We define
\[(Uf)(s) = c(s)^{-1}f(\phi(s))\]
for all $f \in \Ltwow$ and all $s \in [0,\beta]$.
It follows from the definitions of $\psi$ and $\phi$ that $\psi'(t) = w(t)^{1/2}p(t)^{-1/2}$ and $\phi'(s) = p(\phi(s))^{1/2}w(\phi(s))^{-1/2}$. Hence, making the change of variables $x = \phi(s)$,
\begin{eqnarray*}
\norm{f}_w^2	& = & \int_0^1 \mod{f(x)}^2 w(x) \mathrm{d}x\\
							& = & \int_{\psi(0)}^{\psi(1)} \mod{f(\phi(s))}^2 w(\phi(s)) \phi'(s) \mathrm{d}s\\
							& = & \int_0^{\beta} \mod{f(\phi(s))}^2 (w(\phi(s))p(\phi(s)))^{1/2} \mathrm{d}s\\
							& = & \int_0^{\beta} \mod{c(s)^{-1}f(\phi(s))}^2 \mathrm{d}s\\
							& = & \norm{Uf}_{L^2}^2
\end{eqnarray*}
for all $f \in \Ltwow$. It follows that $U$ is a unitary operator from $\Ltwow$ to $L^2(0,\beta)$.

Since $p$ and $w$ are smooth and non-zero on $(0,1)$ and $\phi$ is smooth and non-zero on $(0,\beta)$, $c$ is smooth and non-zero on $(0,\beta)$. Hence $Uf \in \mathcal{C}_c^{\infty}(0,\beta)$ if and only if $f \in \mathcal{C}_c^{\infty}(0,1)$. Making the change of variables $x = \phi(s)$ as above,
\begin{eqnarray*}
Q(f)	& = & \int_0^1 \mod{f'(x)}^2 p(x) \mathrm{d}x\\
			& = & \int_0^{\beta} \mod{c'(s)(Uf)(s)+c(s)(Uf)'(s)}^2 \psi'(\phi(s))^2 p(\phi(s)) \phi'(s) \mathrm{d}s\\
			& = & \int_0^{\beta} \mod{c'(s)(Uf)(s)+c(s)(Uf)'(s)}^2 \phi'(s)^{-1} p(\phi(s)) \mathrm{d}s\\
			& = & \int_0^{\beta} \left\{ c(s)^2\mod{(Uf)'(s)}^2 + c'(s)c(s)\dd{s}\left(\mod{(Uf)(s)}^2\right) + c'(s)^2\mod{(Uf)(s)}^2 \right\} \frac{p(\phi(s))}{\phi'(s)} \mathrm{d}s\\
			& = & \int_0^{\beta} \left\{ c(s)^2\mod{(Uf)'(s)}^2 + (c'(s)^2-(cc')'(s))\mod{(Uf)(s)}^2 \right\} \frac{p(\phi(s))}{\phi'(s)}\\
			&   & -c(s)c'(s)\dd{s}\frac{p(\phi(s)}{\phi'(s)}\mod{(Uf)(s)}^2 \mathrm{d}s\\
			& = & \int_0^{\beta} \left\{\mod{(Uf)'(s)}^2 + V(s)\mod{(Uf)(s)}^2 \right\}\mathrm{d}s\\
			& = & \hat{Q}_c(Uf)
\end{eqnarray*}
for all $f \in \mathcal{C}_c^{\infty}(0,1)$.

Since $\hat{Q}_c$ is the form arising from the symmetric operator
\[ H_c = -\Delta + V \]
with domain $\dom{H_c} = \mathcal{C}_c^{\infty}(0,\beta)$ and
\[ \ip{H_cf}{f} = \hat{Q}_c(f) = Q(U^{-1}f) \geq 0 \]
for all $f \in \mathcal{C}_c^{\infty}(0,\beta)$, $\hat{Q}_c$ is closable and its closure, $\hat{Q}$, is associated with a non-negative self-adjoint extension $H$ of $H_c$ by Theorem 4.4.5 of \cite{stdo}.

We have proven that $f \in \mathcal{C}_c^{\infty}(0,1)$ if and only if $Uf \in \mathcal{C}_c^{\infty}(0,\beta)$, and
\begin{equation}\label{eq:equiv}
Q(f) = \hat{Q}_c(Uf)
\end{equation}
for all $f \in \mathcal{C}_c^{\infty}(0,1)$.
We now prove that $Uf \in \dom{\hat{Q}}$ if and only if $f \in \dom{Q}$ and that
\begin{equation}
Q(f) = \hat{Q}(Uf)
\end{equation}
for all $f \in \dom{Q}$.
By Theorem \ref{thm:core}, $\mathcal{C}_c^{\infty}(0,1)$ is dense in $\dom{Q}$ with respect to $\norm{\cdot}_{W^{1,2}}$. Let $f \in \dom{Q}$. Then there is a sequence $(f_n)$ in $\mathcal{C}_c^{\infty}(0,1)$ such that $f_n \stackrel{W^{1,2}}{\longrightarrow} f$ as $n \to \infty$. Hence $(f_n)$ is a Cauchy sequence in $\mathcal{C}_c^{\infty}(0,1)$ with respect to $\norm{\cdot}_{W^{1,2}}$. By \eqnref{eq:equiv} and the unitarity of $U$, $(Uf_n)$ is a Cauchy sequence in $\mathcal{C}_c^{\infty}(0,\beta)$ with respect to $\tnorm{\cdot} = (\norm{\cdot}^2_{L^2} + \hat{Q}(\cdot))^{1/2}$. Since $\dom{\hat{Q}}$ is the completion of $\mathcal{C}_c^{\infty}(0,\beta)$ with respect to $\tnorm{\cdot}$, there exists $g \in \dom{\hat{Q}}$ such that $\tnorm{Uf_n - g} \to 0$ as $n \to \infty$. Also $\norm{f_n - f}_w \to 0$ as $n \to \infty$ implies that $\norm{Uf_n - Uf}_{L^2} \to 0$ as $n \to \infty$. It follows that $Uf = g \in \dom{\hat{Q}}$. The converse is similar. For all $f \in \dom{Q}$,
\begin{eqnarray*}
\hat{Q}(Uf) & = & \tnorm{Uf}^2 - \norm{Uf}^2_{L^2}\\
						& = & \lim_{n \to \infty} \tnorm{Uf_n}^2 - \lim_{n \to \infty} \norm{Uf_n}^2_{L^2}\\
						& = & \lim_{n \to \infty} \norm{f_n}^2_{W^{1,2}} - \lim_{n \to \infty} \norm{f_n}^2_w\\
						& = & \norm{f}^2_{W^{1,2}}-\norm{f}^2_w\\
						& = & Q(f)
\end{eqnarray*}
where, as before, $(f_n)$ is a sequence in $\mathcal{C}_c^{\infty}(0,1)$ such that $f_n \stackrel{W^{1,2}}{\longrightarrow} f$.

It now follows from the polarisation identity for sesquilinear forms that
\[ \hat{Q}'(Uf,Ug) = Q'(f,g) \]
for all $f, g \in \dom{Q}$. It follows immediately that $Uf \in \dom{H}$ if and only if $f \in \dom{\bar{L}}$, and that $H = U\bar{L}U^{-1}$.
Since $c$, $c'$, $c''$, $p$, $\phi$ and $1/\phi'$ are smooth on $(0,1)$, it follows that $V$ is smooth on $(0,1)$. For the asymptotics of $V$, see Appendix \ref{app:asymptotics}.
\qed
\end{proof}

\subsection{Eigenvalue asymptotics}

Throughout this section, $(\mu_n)_{n=1}^{\infty}$ shall be the eigenvalues of $L$, or equivalently of $\bar{L}$ by Proposition \ref{prp:domains}, listed in increasing order and repeated according to multiplicity as in Corollary \ref{cor:evs}. By Theorem \ref{thm:schr}, $\bar{L}$ is unitarily equivalent to a Schr\"odinger operator $H$ with Dirichlet boundary conditions. We shall use the Rayleigh-Ritz variational formula to obtain bounds on $\mu_n$ in terms of the Dirichlet eigenvalues of $-\Delta$ on various intervals. Recall that the Dirichlet eigenvalues of $-\Delta$ on the interval $[a,b]$ are $\{n^2\pi^2(b-a)^{-2} : n \in \setN \}$ with corresponding eigenfunctions $\{\sin(n\pi(x-a)/(b-a)) : n \in \setN\}$. We quote the variational formula from \cite{stdo}:

If $K$ is a non-negative self-adjoint operator on a Hilbert space $\mathcal{H}$ and $M$ is a finite-dimensional subspace of $\dom{K}$ then we define
\[\lambda(M) = \sup \{\ip{Kf}{f} : f \in M \textrm{ and } \norm{f} = 1 \} \]
and
\[\lambda_n = \inf \{\lambda(M): M \subseteq \dom{K} \textrm{ and dim}(M) = n \}. \]
If $\lambda_n \to +\infty$ as $n \to \infty$ then $K$ has compact resolvent and the numbers $\lambda_n$ coincide with the eigenvalues of $K$ written in increasing order and repeated according to multiplicity. Since $K$ is non-negative and self-adjoint, it is associated with a closed quadratic form $Q$. If $\mathcal{D}$ is a core for $Q$, that is, a subspace of the domain of $Q$ such that the closure of $Q$ restricted to $\mathcal{D}$ is $Q$, then we have
\[ \lambda_n = \inf \{\lambda(M): M \subseteq \mathcal{D} \textrm{ and dim}(M) = n \} \]
for all $n \in \setN$.

\begin{theorem}\label{thm:muasym}
Let $\beta = \int_0^1 y^{-1/2}(1-y)^{-1/2}(1+y)^{-1/2} \mathrm{d}y$, as in Theorem \ref{thm:schr}. Then
\[\lim_{n \to \infty}n^{-2}\mu_n = \frac{\pi^2}{\beta^2}.\]
Indeed, if $\alpha = \min\{V(s):s \in (0,\beta)\}$, where $V$ is as in Theorem \ref{thm:schr}, then
\[ \mu_n \geq \frac{n^2\pi^2}{\beta^2} + \alpha\]
for all $n \in \setN$, and
\[ \mu_n \leq \frac{n^2\pi^2}{\beta^2} + O(n^{4/3}) \]
as $n \to \infty$.
\end{theorem}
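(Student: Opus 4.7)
The plan is to work with the unitarily equivalent Schr\"odinger operator $H = -\Delta + V$ on $L^2(0,\beta)$ constructed in Theorem \ref{thm:schr}, so that the $\mu_n$ are the eigenvalues of $H$, and then apply the Rayleigh--Ritz variational characterisation twice, taking $\mathcal{C}_c^\infty(0,\beta)$ as a common form core for both $H$ and the Dirichlet Laplacian $-\Delta_D$ on $(0,\beta)$. The known eigenvalues $n^2\pi^2/\beta^2$ of $-\Delta_D$ supply both the leading order and the benchmark against which the contribution of $V$ is measured.

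For the lower bound, the pointwise inequality $V \geq \alpha$ immediately gives $\hat{Q}_c(f) \geq \int_0^\beta \mod{f'}^2\,\mathrm{d}s + \alpha \norm{f}_{L^2}^2$ for all $f \in \mathcal{C}_c^\infty(0,\beta)$. Since this space is a form core for both operators in question, the min-max principle yields $\mu_n \geq n^2\pi^2/\beta^2 + \alpha$ in one line.

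The upper bound is the substantive step. Fix a small $\delta > 0$ and consider the $n$-dimensional trial space $M_n = \mathrm{span}\{g_1,\ldots,g_n\}$, where
\[ g_k(s) = \sin\bigl(k\pi(s-\delta)/(\beta - 2\delta)\bigr) \]
for $s \in [\delta,\beta-\delta]$ and $g_k(s) = 0$ elsewhere. Each $g_k$ lies in $\dom{\hat{Q}}$ because $V$ is bounded on its support, and the $g_k$ and $g_k'$ form mutually $L^2$-orthogonal systems. Hence for any $f = \sum_{k=1}^n c_k g_k$ with $\norm{f}_{L^2} = 1$,
\[ \hat{Q}(f) = \sum_{k=1}^n \mod{c_k}^2 \int_\delta^{\beta-\delta} \mod{g_k'(s)}^2\,\mathrm{d}s + \int V\mod{f}^2\,\mathrm{d}s \leq \frac{n^2\pi^2}{(\beta-2\delta)^2} + \sup_{s \in [\delta,\beta-\delta]} V(s). \]
The endpoint asymptotics $V(s) \sim \tfrac{3}{4}s^{-2}$ and $V(s) \sim (\varepsilon^{-2}-\tfrac{1}{4})(\beta-s)^{-2}$ from Theorem \ref{thm:schr}, together with continuity of $V$ on the interior of $(0,\beta)$, yield $\sup_{[\delta,\beta-\delta]} V = O(\delta^{-2})$ for small $\delta$, while expanding $(\beta-2\delta)^{-2}$ gives $n^2\pi^2/(\beta-2\delta)^2 = n^2\pi^2/\beta^2 + O(n^2\delta)$. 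The min-max principle then yields $\mu_n \leq n^2\pi^2/\beta^2 + O(n^2\delta) + O(\delta^{-2})$, and the choice $\delta = n^{-2/3}$ balances the two competing errors to produce $\mu_n \leq n^2\pi^2/\beta^2 + O(n^{4/3})$.

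The two bounds combine immediately to give $n^{-2}\mu_n \to \pi^2/\beta^2$. The main obstacle is the upper bound: one must simultaneously keep the supports of the trial functions at positive distance $\delta$ from both singular endpoints of $V$ and control the price paid for the blow-up of $V$ on the shrinking avoidance interval $[\delta,\beta-\delta]$. The exponent $4/3$ is precisely what emerges from balancing these two competing sources of error.
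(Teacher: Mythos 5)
Your proposal is correct and follows essentially the same route as the paper: the lower bound from $V \geq \alpha$ together with the Dirichlet Laplacian on $(0,\beta)$, and the upper bound by restricting to functions supported in $[\delta,\beta-\delta]$, bounding $V$ there by its supremum $c_{\delta} = O(\delta^{-2})$, comparing with the Dirichlet eigenvalues $n^2\pi^2/(\beta-2\delta)^2$, and balancing with $\delta = n^{-2/3}$. The only (cosmetic) difference is that you exhibit an explicit sine trial space where the paper argues abstractly via the Friedrichs extension of $-\Delta + c_{\delta}I$ on $\mathcal{C}_c^{\infty}(\delta,\beta-\delta)$ and the min--max formula.
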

\begin{proof}
Define
\[Q_{\alpha}(g) = \ip{Hg}{g} - \alpha\ip{g}{g} = \hat{Q}_c(g) - \alpha\ip{g}{g} \geq 0\]
for all $g \in \mathcal{C}_c^{\infty}(0,\beta)$. Then $Q_{\alpha}$ is a closable form and its closure $\conj{Q}_{\alpha}$ is the form associated with a non-negative self-adjoint extension $H_{\alpha}$ of $H_c -\alpha I$, where $H_c = -\Delta + V$ with $\dom{H_c} = \mathcal{C}_c^{\infty}(0,\beta)$. Since $\mathcal{C}_c^{\infty}(0,\beta)$ is a core for both $\hat{Q}$ and $\bar{Q}_{\alpha}$, the variational formula implies that the $n$th eigenvalue of $H_{\alpha}$ is $\mu_n - \alpha$.

We now define
\[\tilde{Q}(g) = \int_0^{\beta} \mod{g'(s)}^2 \mathrm{d}s \leq \int_0^{\beta} \mod{g'(s)}^2 +(V(s)-\alpha)\mod{g(s)}^2 \mathrm{d}s = Q_{\alpha}(g) \]
for all $g \in \mathcal{C}_c^{\infty}(0,\beta)$. The closure $\conj{\tilde{Q}}$ of $\tilde{Q}$ is the form associated with the operator $-\Delta$ on $L^2(0,\beta)$ with Dirichlet boundary conditions. Since $\mathcal{C}_c^{\infty}(0,\beta)$ is a core for both $\conj{\tilde{Q}}$ and $\bar{Q}_{\alpha}$, the variational formula implies that
\[\frac{n^2\pi^2}{\beta^2} \leq \mu_n - \alpha \]
for all $n \in \setN$.

For all sufficiently small $\delta > 0$ define
\[ K_{\delta}(f) = (-\Delta + c_{\delta}I)(f) \]
for all $f \in \mathcal{C}_c^{\infty}(\delta,\beta-\delta)$, where
\[ c_{\delta} = \sup\{V(s):s \in (\delta,\beta-\delta).\]
Then $K_{\delta}$ is a non-negative symmetric operator and hence has a non-negative self-ajoint extension $\tilde{K}_\delta$, called the Friedrichs extension of $K_{\delta}$, associated with the closure of the form $Q_{\delta}$ defined by
\[ Q_{\delta}(f) = \ip{K_{\delta}f}{f} \]
for all $f \in \mathcal{C}_c^{\infty}(\delta,\beta-\delta)$.
For all finite dimensional subspaces $M$ of $\dom{K_{\delta}}$, define
\[ \tilde{\lambda}_{\delta}(M) = \sup\{\ip{K_{\delta}f}{f}:f \in M \textrm{ and }\norm{f}=1 \} \]
and
\[ \tilde{\lambda}_{\delta,n} = \inf\{\tilde{\lambda}_{\delta}(M):M \subseteq \mathcal{C}_c^{\infty}(\delta,\beta-\delta) \textrm{ and dim}(M)=n\}.\]
Then, for $M \subseteq \mathcal{C}_c^{\infty}(\delta,\beta-\delta)$,
\begin{eqnarray*}
\tilde{\lambda}_{\delta}(M)	& = & \sup\{\ip{(-\Delta + c_{\delta}I)f}{f}:f \in M \textrm{ and }\norm{f}=1 \}\\
														& = & \sup\{\ip{(-\Delta f}{f}:f \in M \textrm{ and }\norm{f}=1 \} + c_{\delta}
\end{eqnarray*}
and hence
\[ \tilde{\lambda}_{\delta,n} = \frac{n^2\pi^2}{(\beta-2\delta)^2} + c_{\delta} \]
by the variational formula for the Dirichlet eigenvalues of $-\Delta$ on $(\delta,\beta-\delta)$. Since $\tilde{\lambda}_{\delta,n} \to +\infty$ as $n \to \infty$ and $\mathcal{C}_c^{\infty}(\delta,\beta-\delta)$ is a core for the quadratic form associated with $\tilde{K}_{\delta}$, $\tilde{K}_{\delta}$ has compact resolvent and its eigenvalues coincide with $\tilde{\lambda}_{\delta,n}$.
For all $M \subseteq \dom{H}$, define
\[ \mu(M) = \sup\{\ip{Hf}{f}:f \in M \textrm{ and }\norm{f}=1 \}. \]
If $f \in \mathcal{C}_c^{\infty}(\delta,\beta-\delta)$, then
\[ \ip{Hf}{f} = \ip{(-\Delta + V)f}{f} \leq \ip{(-\Delta + c_{\delta}I)f}{f} = \ip{K_{\delta}f}{f} \]
so $\mu(M) \leq \tilde{\lambda}_{\delta}(M)$ for all $M \subseteq \mathcal{C}_c^{\infty}(\delta,\beta-\delta)$.
Since $\mathcal{C}_c^{\infty}(0,\beta)$ is a core for $\hat{Q}$,
\begin{eqnarray*}
\mu_n	& = 		& \inf\{\mu(M):M \subseteq \mathcal{C}_c^{\infty}(0,\beta) \textrm{ and dim}(M)=n\}\\
			& \leq 	& \inf\{\mu(M):M \subseteq \mathcal{C}_c^{\infty}(\delta,\beta-\delta) \textrm{ and dim}(M)=n\}\\
			& \leq 	& \inf\{\tilde{\lambda}_{\delta}(M):M \subseteq \mathcal{C}_c^{\infty}(\delta,\beta-\delta) \textrm{ and dim}(M)=n\}\\
			& = 		& \frac{n^2\pi^2}{(\beta-2\delta)^2}+c_{\delta}
\end{eqnarray*}
for all $n \in \setN$. From the asymptotics of $V$, $c_{\delta}\delta^2 \to \max\{\frac{3}{4},\frac{1}{\varepsilon^2}-\frac{1}{4}\} =: c$ as $\delta \to 0$. Hence, given $\nu >0$, $c_{\delta} \leq (c+\nu)\delta^{-2}$ for all sufficiently small $\delta$. For such $\delta$,
\[\mu_n - \frac{n^2\pi^2}{\beta^2} \leq n^2\pi^2\left(\frac{1}{(\beta-2\delta)^2}-\frac{1}{\beta^2}\right) +\frac{c+\nu}{\delta^2} =: F_n(\delta)\]
for all $n \in \setN$. Hence, for all sufficiently large $n \in \setN$,
\begin{eqnarray*}
\mu_n - \frac{n^2\pi^2}{\beta^2}	& \leq 	& F_n(n^{-2/3})\\
																	& =  		& n^2\pi^2\left(\frac{1}{(\beta-2n^{-2/3})^2}-\frac{1}{\beta^2}\right) +(c+\nu)n^{4/3}\\
																	& =  		& n^2\pi^2\frac{4n^{-2/3}(\beta-n^{-2/3})}{\beta^2(\beta-2n^{-2/3})^2} +(c+\nu)n^{4/3}\\
																	& \sim 	& \left(\frac{4\pi^2}{\beta^3} +c +\nu\right)n^{4/3}
\end{eqnarray*}
as $n \to \infty$.
\qed
\end{proof}

\subsection{Eigenvalue numerics}\label{sec:numerics}

Let $\{a^{(m)}\}_{m=1}^{\infty}$ be a monotone decreasing sequence in $(0,1)$ converging to $0$ and $\{b^{(m)}\}_{m=1}^{\infty}$ a monotone increasing sequence in $(0,1)$ converging to $1$ such that $a^{(1)} < b^{(1)}$. For each $m \in \setN$, we consider the operator $L^{(m)}$ defined by \eqnref{eq:ldef} on the domain 
\[ \mathcal{D}^{(m)} = \{f \in \dom{\bar{L}}: f|_{I^{(m)}} \in \mathcal{C}^{\infty}(I^{(m)}) \textrm{ and } f(a^{(m)})=f(b^{(m)})= 0 \}.\]
Then $L^{(m)}$ is the operator corresponding to a regular Sturm-Liouville problem. By Theorem 14 of \cite{burkhill}, $L^{(m)}$ has infinitely many eigenvalues $\mu^{(m)}_n$ tending to $+\infty$.

\begin{lemma}\label{lem:decbound}
For each $n \in \setN$, $\{\mu^{(m)}_n\}_{m=1}^{\infty}$ is a decreasing sequence such that $\mu^{(m)}_n \geq \mu_n$ for all $m \in \setN$.
\end{lemma}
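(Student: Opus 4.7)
The plan is to prove both monotonicity in $m$ and the lower bound $\mu_n^{(m)}\ge \mu_n$ in a single stroke by invoking the Rayleigh--Ritz variational formula (quoted just above Theorem \ref{thm:muasym}) applied to suitable common form cores. The key idea is that, because $a^{(m)},b^{(m)}\in(0,1)$ stay away from the singular endpoints of the weight, the coefficients $p,w$ are smooth and bounded above and below on each $I^{(m)}$, so $L^{(m)}$ is a classical regular Sturm--Liouville operator whose associated closed quadratic form is
\[ Q^{(m)}(f) = \int_{a^{(m)}}^{b^{(m)}} p(x)\,\mod{f'(x)}^2 \mathrm{d}x \]
with form domain the weighted $H^1_0(I^{(m)})$, and for which $\mathcal{C}_c^{\infty}(a^{(m)},b^{(m)})$ is a form core.

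First I would observe that extension by zero gives a canonical inclusion $\mathcal{C}_c^{\infty}(a^{(m)},b^{(m)})\hookrightarrow \mathcal{C}_c^{\infty}(0,1)$ that preserves both $\norm{\cdot}_w$ and $Q$ (the extended function is smooth, compactly supported inside $(0,1)$, and identically zero on $[0,a^{(m)}]\cup[b^{(m)},1]$, so all integrals defining $\norm{\cdot}_w$ and $Q$ are unchanged). By Theorem \ref{thm:core}, $\mathcal{C}_c^{\infty}(0,1)$ is a core for $Q$, which is the form of $\bar{L}$. Thus both the operator $L^{(m)}$ and the operator $\bar{L}$ can be described variationally via a nested chain of form cores
\[ \mathcal{C}_c^{\infty}(a^{(m)},b^{(m)}) \subseteq \mathcal{C}_c^{\infty}(a^{(m+1)},b^{(m+1)}) \subseteq \mathcal{C}_c^{\infty}(0,1). \]

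Next I would apply the Rayleigh--Ritz formula in each case: writing $\lambda(M)=\sup\{Q(f):f\in M,\ \norm{f}_w=1\}$, the $n$th eigenvalues satisfy
\[ \mu_n^{(m)} = \inf\{\lambda(M):M\subseteq \mathcal{C}_c^{\infty}(a^{(m)},b^{(m)}),\ \dim M = n\} \]
and
\[ \mu_n = \inf\{\lambda(M):M\subseteq \mathcal{C}_c^{\infty}(0,1),\ \dim M = n\}, \]
where $\lambda(M)$ is unambiguous because $Q$ and $Q^{(m)}$ agree on functions supported in $I^{(m)}$. Since taking the infimum over a larger collection of admissible subspaces can only decrease the value, the chain of inclusions yields $\mu_n\le \mu_n^{(m+1)}\le \mu_n^{(m)}$ at once, which is exactly the two assertions of the lemma.

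The main potential obstacle is the identification of $\mathcal{C}_c^{\infty}(a^{(m)},b^{(m)})$ as a form core for $L^{(m)}$ (so that the variational formula may be applied with this subspace): this requires knowing that the self-adjoint realization of the regular Sturm--Liouville expression on $I^{(m)}$ with Dirichlet conditions is the Friedrichs extension of its restriction to compactly supported smooth functions. This is a standard fact in the regular case because $p^{-1}$ and $w$ are bounded on $I^{(m)}$, so density of $\mathcal{C}_c^{\infty}(a^{(m)},b^{(m)})$ in $H^1_0(I^{(m)})$ reduces to the analogous unweighted statement; I would cite the regular Sturm--Liouville theory (e.g.\ from \cite{burkhill} or \cite{stdo}) for this. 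Once this is in hand the rest of the argument is a one-line application of monotonicity of the min-max, with no further calculation required.
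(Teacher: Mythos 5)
Your proof is correct and follows essentially the same route as the paper: the paper simply notes that the (form) domains are nested, $\mathcal{D}^{(1)} \subseteq \mathcal{D}^{(2)} \subseteq \ldots \subseteq \dom{\bar{L}}$, and appeals to the variational formula, exactly the monotonicity-of-min-max argument you give. Your version is somewhat more careful than the paper's one-line proof, in that you make explicit the extension-by-zero identification and the fact that $\mathcal{C}_c^{\infty}(a^{(m)},b^{(m)})$ is a form core for the regular problem $L^{(m)}$, details the paper leaves implicit.
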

\begin{proof}
It is easy to see that
\[ \mathcal{D}^{(1)} \subseteq \mathcal{D}^{(2)} \subseteq \ldots \subseteq \dom{\bar{L}} \]
and hence the result follows immediately from the variational formula.
\qed
\end{proof}

\begin{lemma}\label{lem:conv}
For each $n \in \setN$, $\mu^{(m)}_n \to \mu_n$ as $m \to \infty$.
\end{lemma}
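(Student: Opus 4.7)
The plan is to combine Lemma \ref{lem:decbound}, which gives the lower bound $\mu^{(m)}_n \geq \mu_n$ for every $m$, with a matching upper bound $\limsup_{m \to \infty} \mu^{(m)}_n \leq \mu_n$ obtained from the Rayleigh--Ritz variational formula. Fix $\eta > 0$. By Theorem \ref{thm:core} together with the corollary identifying $Q$ as the quadratic form associated with $\bar{L}$, the space $\mathcal{C}_c^{\infty}(0,1)$ is a form core for $\bar{L}$, and the variational formula quoted from \cite{stdo} yields an $n$-dimensional subspace $M \subset \mathcal{C}_c^{\infty}(0,1)$ with
\[ \lambda(M) = \sup\{Q(f)/\norm{f}_w^2 : f \in M,\ f \neq 0\} < \mu_n + \eta. \]

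Next I exploit finite-dimensionality. Picking any basis $\phi_1,\dots,\phi_n$ of $M$ and setting $K = \bigcup_{j=1}^n \supp{\phi_j}$, the set $K$ is a compact subset of $(0,1)$. Since $a^{(m)} \to 0$ and $b^{(m)} \to 1$, there exists $m_0$ with $K \subset (a^{(m)},b^{(m)})$ for all $m \geq m_0$; for such $m$ each $\phi_j$ is smooth on $[0,1]$, vanishes identically on neighbourhoods of $a^{(m)}$ and $b^{(m)}$, and lies in $\mathcal{C}_c^\infty(0,1) \subseteq \dom{L} \subseteq \dom{\bar{L}}$. In particular $\phi_j|_{I^{(m)}} \in \mathcal{C}^{\infty}(I^{(m)})$ and $\phi_j(a^{(m)}) = \phi_j(b^{(m)}) = 0$, so each $\phi_j$ belongs to $\mathcal{D}^{(m)}$ and hence $M \subseteq \mathcal{D}^{(m)}$.

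Finally, the regular Sturm--Liouville operator $L^{(m)}$ is non-negative and self-adjoint after closure, its associated quadratic form restricts to $Q$ on $\mathcal{D}^{(m)}$, and its eigenvalues $\mu^{(m)}_k$ tend to $+\infty$, so the variational formula recorded in the paper applies and gives
\[ \mu^{(m)}_n \leq \lambda(M) < \mu_n + \eta \]
for every $m \geq m_0$. Combining with Lemma \ref{lem:decbound} yields $\mu_n \leq \mu^{(m)}_n < \mu_n + \eta$ for all $m \geq m_0$, and letting $\eta \to 0$ gives $\mu^{(m)}_n \to \mu_n$. The only point that requires real care is the verification that a compactly supported smooth function on $(a^{(m)},b^{(m)})$, extended by zero, lies in the unconventional domain $\mathcal{D}^{(m)}$; this is immediate from the definition, so no serious obstacle arises.
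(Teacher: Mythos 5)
Your proof is correct and follows essentially the same route as the paper's: use the fact that $\mathcal{C}_c^{\infty}(0,1)$ is a form core to pick an $n$-dimensional $M$ with $\mu(M) < \mu_n + \eta$, observe that finite-dimensionality forces $M \subseteq \mathcal{D}^{(m)}$ for all large $m$, apply the variational formula to $L^{(m)}$, and combine with Lemma \ref{lem:decbound}. Your extra care with the basis, the compact union of supports, and the applicability of the variational formula to $L^{(m)}$ merely fills in details the paper leaves implicit.
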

\begin{proof}
Since $\bar{L}$ has compact resolvent and $\mathcal{C}^{\infty}_c(0,1)$ is a form core for $\bar{L}$,
\[ \mu_n = \inf \{\mu(M):M \subseteq \mathcal{C}^{\infty}_c(0,1) \textrm{ and dim}(M) = n\}\]
where $\mu(M)$ is as in the variational formula. Hence, given $\eta>0$, there exists $M \subseteq \mathcal{C}^{\infty}_c(0,1)$ such that $\textrm{dim}(M)=n$ and $\mu_n \leq \mu(M) < \mu_n + \eta$. Since $M$ is finite-dimensional, we can choose $m_0 \in \setN$ such that $\supp{f} \in (a^{(m_0)},b^{(m_0)})$ for all $f \in M$. Hence $M \subseteq \mathcal{D}^{(m_0)}$ and so $\mu^{(m_0)}_n \leq \mu(M) < \mu_n + \eta$. Combining this with Lemma \ref{lem:decbound}, we have $\mu_n \leq \mu^{(m)}_n < \mu_n + \eta$ for all $m \geq m_0$.
\end{proof}

\begin{theorem}
If $\lambda_n$ is as in Corollary \ref{cor:evs} and $\lambda^{(m)}_n = \varepsilon \mu^{(m)}_n / 2$, then, for each $n \in \setN$, $\lambda^{(m)}_n$ decreases monotonically to $\lambda_n$ as $m \to \infty$.
\end{theorem}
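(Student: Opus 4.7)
The plan is to observe that this theorem is essentially an immediate corollary of Lemma~\ref{lem:decbound} and Lemma~\ref{lem:conv}, combined with the eigenvalue correspondence established in Corollary~\ref{cor:correspondence}. No new analytic work is required; the argument is just a rescaling by the positive constant $\varepsilon/2$.

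Concretely, I would first invoke Lemma~\ref{lem:decbound}: for each fixed $n \in \setN$, the sequence $(\mu^{(m)}_n)_{m=1}^{\infty}$ is monotone decreasing in $m$ and bounded below by $\mu_n$. Since $\varepsilon > 0$, multiplying through by $\varepsilon/2$ preserves both the monotonicity and the direction of the inequality, so $(\lambda^{(m)}_n)_{m=1}^{\infty}$ is monotone decreasing and $\lambda^{(m)}_n \geq \varepsilon\mu_n/2$ for all $m$. Next, by Lemma~\ref{lem:conv}, $\mu^{(m)}_n \to \mu_n$ as $m \to \infty$, so $\lambda^{(m)}_n = \varepsilon \mu^{(m)}_n / 2 \to \varepsilon \mu_n / 2$.

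It remains only to identify the limit $\varepsilon \mu_n / 2$ with the eigenvalue $\lambda_n$ from the intended enumeration. By Corollary~\ref{cor:correspondence}, the eigenvalues of $A_+$ are precisely $\varepsilon \mu_n / 2$ where $\mu_n$ runs over the eigenvalues of the Sturm--Liouville operator $L$ (equivalently $\bar{L}$, by Proposition~\ref{prp:domains}); since Corollary~\ref{cor:evs} enumerates $(\mu_n)$ in increasing order with multiplicity, and multiplication by the positive constant $\varepsilon/2$ preserves this ordering, the resulting enumeration $\lambda_n = \varepsilon\mu_n/2$ agrees with the enumeration of the positive eigenvalues of $-iH$ used in Corollary~\ref{cor:hevs}.

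There is no real obstacle here; the only thing to be slightly careful about is to make sure the indexing of $\lambda_n$ matches, but this is automatic from the ordering conventions already fixed. The entire proof should fit in a few lines.
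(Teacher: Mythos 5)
Your proposal is correct and follows essentially the same route as the paper, whose proof of this theorem is simply ``Immediate from Lemmas \ref{lem:decbound} and \ref{lem:conv}.'' You have merely spelled out the rescaling by the positive constant $\varepsilon/2$ and the identification of the limit with the enumeration of Corollary \ref{cor:evs}, which is exactly the intended (and entirely routine) argument.
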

\begin{proof}
Immediate from Lemmas \ref{lem:decbound} and \ref{lem:conv}.
\qed
\end{proof}

We have shown that we can obtain upper bounds on the eigenvalues of $L$ and $-iH$ by computing the eigenvalues of regular Sturm-Liouville problems, and moreover, that these bounds converge to the eigenvalues of these operators as we allow the endpoints of the regular problems converge to the endpoints of the Sturm-Liouville problem associated with $L$. We give tables of numerical calculations, obtained using the software package SLEDGE \cite{sledge} (obtainable from http://www.netlib.org/misc/sledge), of $\lambda_n^{(m)}$ for $a^{(m)} = 10^{-m}$, $b^{(m)} = 1 - 10^{-m}$ and various values of $\varepsilon$. For comparison, we also include numerical calculations from \cite{bobs}, \cite{davies-2007} and \cite{chugunova-2007}. Our computations are performed with an absolute error tolerance of $10^{-4}$, so we can be reasonably confident that any calculated values which exceed our upper bounds by more than this amount are not accurate to the stated precision. All values given in \cite{davies-2007} are consistent with these bounds when rounded to two decimal places, but there are some discrepancies at higher levels of precision. All values given in \cite{bobs} and \cite{chugunova-2007} are consistent with these bounds when rounded to one decimal place, but both sets of calculations have discrepancies at higher levels of precision.

\begin{table}[p]
\begin{tabular}{r|ccccccc}
$n$ & $\lambda^{(3)}_n$ & $\lambda^{(4)}_n$ & $\lambda^{(5)}_n$ & $\lambda^{(6)}_n$ &     $\lambda^{(7)}_n$ & $\lambda_n$ \cite{davies-2007} & $\lambda_n$ \cite{chugunova-2007}\\
\hline
  1	&  1.45457		&   1.44906		&   1.44851&   1.44845&   1.44844&	 1.4485&	 1.449323\\
  2	&  4.34574		&   4.31891		&   4.31614&   4.31587&   4.31584&	 4.3159&	 4.319645\\
  3	&  8.70318		&   8.63035		&   8.62264&   8.62186&   8.62178&	 8.6219&	 8.631474\\
  4	&  14.53324		&  14.38251		&  14.36590&  14.36421&  14.36405&	14.3638&	14.382886\\
  5	&  21.84048		&  21.57464		&  21.54473&  21.54167&  21.54137&	21.5414&	---\\
\end{tabular}
\caption{Numerical calculations for $\varepsilon=1$, compared with calculations from \cite{davies-2007} and \cite{chugunova-2007}}
\end{table}

\begin{table}[p]
\begin{tabular}{r|ccccccc}
$n$ & $\lambda^{(3)}_n$ &                $\lambda^{(4)}_n$ & $\lambda^{(5)}_n$ & $\lambda^{(6)}_n$ &                      $\lambda^{(7)}_n$ & $\lambda_n$ \cite{chugunova-2007}\\
\hline
  1&   1.17382&   1.16782&   1.16720&   1.16714&   1.16714&	 1.167342\\
  2&   2.99250&   2.97016&   2.96847&   2.96823&   2.96821&	 2.968852\\
  3&   5.54084&   5.48803&   5.48231&   5.48174&   5.48168&	 5.483680\\
  4&   8.82509&   8.72519&   8.71398&   8.71284&   8.71272&	 8.715534\\
  5&  12.85050&  12.68265&  12.66336&  12.66138&  12.66119&	---\\
  6&  17.61828&  17.36052&  17.32987&  17.32674&  17.32643&	---\\
  7&  23.13086&  22.75976&  22.71552&  22.71081&  22.71033&	---\\
  8&  29.39064&  28.88240&  28.81847&  28.81174&  28.81106&	---\\
  9&  36.39780&  35.72664&  35.63949&  35.63022&  35.62928&	---\\
 10&  44.15374&  43.29376&  43.17838&  43.16790&  43.16666&	---\\
\end{tabular}
\caption{Numerical calculations for $\varepsilon=0.5$, compared with calculations from \cite{chugunova-2007}}
\end{table}

\begin{table}[p]
\begin{tabular}{r|ccccccc}
$n$ &  $\lambda^{(3)}_n$ &                $\lambda^{(4)}_n$ & $\lambda^{(5)}_n$ & $\lambda^{(6)}_n$ &                      $\lambda^{(7)}_n$ &	$\lambda_n$ \cite{bobs} (numerical) & $\lambda_n$ \cite{davies-2007}\\
\hline
  1&   1.02908&   1.01149&   1.00961&   1.00942&   1.00940&	 1.0097&	 1.00968\\
  2&   2.11378&   2.07759&   2.07349&   2.07306&   2.07305&  2.0733&	 2.07334\\
  3&   3.29583&   3.23676&   3.22974&   3.22902&   3.22894&  3.2297&	 3.22978\\
  4&   4.59835&   4.51260&   4.50208&   4.50099&   4.50088&  4.5012&	 4.50134\\
  5&   6.03392&   5.91589&   5.90082&   5.89984&   5.89968&  5.8992&	 5.89993\\
  6&   7.60918&   7.45354&   7.43391&   7.43175&   7.43154&	 7.4298&	 7.43194\\
  7&   9.32789&   9.13017&   9.10350&   9.10063&   9.10034&	 9.0951&	 9.10097\\
  8&  11.19231&  10.94654&  10.91287&  10.90919&  10.90881& 10.8945&	10.9092\\
  9&  13.20382&  12.90464&  12.86256&  12.85789&  12.85742&	12.8252&	12.8578\\
 10&  15.36360&  15.00536&  14.95367&  14.94786&  14.94727&	14.8820&	14.9478\\
\end{tabular}

\caption{Numerical calculations for $\varepsilon=0.1$, compared with calculations from \cite{bobs} and \cite{davies-2007}}
\end{table}

\appendix
\section{Asymptotics of $V$}\label{app:asymptotics}

We want to analyse the asymptotic behaviour of
\begin{equation}\label{eq:v}
V(s) = -c(s)c''(s)\frac{p(\phi(s))}{\phi'(s)} - c(s)c'(s)\frac{\phi'(s)^2 p'(\phi(s)) - p(\phi(s))\phi''(s)}{\phi'(s)^2}
\end{equation}
as $s \to 0+$ and $s \to \beta-$. We have
\begin{eqnarray}
c		& = &	k|_{(0,1)} \circ \phi \label{eq:c}\\
c'	& = &	(k'|_{(0,1)} \circ \phi) \phi' \label{eq:cdash}\\
c''	& = &	(k''|_{(0,1)} \circ \phi) \phi'^2 + (k'|_{(0,1)} \circ \phi) \phi'' \label{eq:cddash}
\end{eqnarray}
where $k$ is the analytic function defined by
\[ k(z) = z^{1/4} (1-z)^{-1/2\varepsilon-1/4} (1+z)^{1/2\varepsilon-1/4} \]
for all $z \in \setC \setminus ((-\infty,0] \cup [1,+\infty))$.

\begin{lemma}\label{lem:kasym}
Asymptotically,
\begin{eqnarray*}
k(z)	& \sim & 	z^{1/4}\\
k'(z)	& \sim & 	\frac{1}{4}z^{-3/4}\\
k''(z)& \sim &	-\frac{3}{16}z^{-7/4}
\end{eqnarray*}
as $z \to 0$ and
\begin{eqnarray*}
k(z)	& \sim & 	2^{1/2\varepsilon-1/4}(1-z)^{-1/2\varepsilon-1/4}\\
k'(z)	& \sim & 	\left(\frac{1}{2\varepsilon}+\frac{1}{4}\right)2^{1/2\varepsilon-1/4}(1-z)^{-1/2\varepsilon-5/4}\\
k''(z)& \sim &	\left(\frac{1}{2\varepsilon}+\frac{1}{4}\right)\left(\frac{1}{2\varepsilon}+\frac{5}{4}\right)
2^{1/2\varepsilon-1/4}(1-z)^{-1/2\varepsilon-9/4}
\end{eqnarray*}
as $z \to 1$.
\end{lemma}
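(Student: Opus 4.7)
My plan is to exploit the product structure of $k(z) = z^{1/4}(1-z)^{-1/(2\varepsilon)-1/4}(1+z)^{1/(2\varepsilon)-1/4}$ at each endpoint separately: at each of $z=0$ and $z=1$, one factor carries the singular power-law behaviour while the remaining factors combine into a function that is analytic and non-zero at that endpoint. The desired asymptotics will then fall out of a two-line product-rule computation, once we check that the subleading terms are genuinely lower order than the leading one.

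For the expansion at $z \to 0$, I would write $k(z) = z^{1/4} g(z)$ with $g(z) = (1-z)^{-1/(2\varepsilon)-1/4}(1+z)^{1/(2\varepsilon)-1/4}$, noting that $g$ is analytic in a neighbourhood of $0$ with $g(0) = 1$. Differentiating, one obtains
\[ k'(z) = \tfrac{1}{4} z^{-3/4} g(z) + z^{1/4} g'(z), \]
\[ k''(z) = -\tfrac{3}{16} z^{-7/4} g(z) + \tfrac{1}{2} z^{-3/4} g'(z) + z^{1/4} g''(z), \]
and since $g, g', g''$ are bounded near $0$ with $g(0)=1$, the first term dominates in each case, giving the stated asymptotics $k \sim z^{1/4}$, $k' \sim \tfrac{1}{4} z^{-3/4}$, $k'' \sim -\tfrac{3}{16} z^{-7/4}$.

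For $z \to 1$, I would use the symmetric decomposition $k(z) = (1-z)^{-1/(2\varepsilon)-1/4} h(z)$ with $h(z) = z^{1/4}(1+z)^{1/(2\varepsilon)-1/4}$. Here $h$ is analytic in a neighbourhood of $1$ and $h(1) = 2^{1/(2\varepsilon)-1/4}$. The product rule gives $k'$ as the sum of $\bigl(\tfrac{1}{2\varepsilon}+\tfrac{1}{4}\bigr)(1-z)^{-1/(2\varepsilon)-5/4} h(z)$ and $(1-z)^{-1/(2\varepsilon)-1/4} h'(z)$, the former being of strictly higher order, and a similar computation for $k''$ yields the product $\bigl(\tfrac{1}{2\varepsilon}+\tfrac{1}{4}\bigr)\bigl(\tfrac{1}{2\varepsilon}+\tfrac{5}{4}\bigr)(1-z)^{-1/(2\varepsilon)-9/4} h(z)$ as the dominant term. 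Substituting $h(1) = 2^{1/(2\varepsilon)-1/4}$ then produces exactly the asymptotic formulae stated.

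There is no genuine obstacle in this argument — it is essentially bookkeeping. The only point requiring a word of care is the claim that the factor playing the role of $g$ or $h$ is analytic and non-vanishing in a one-sided neighbourhood of the endpoint; this is clear from the chosen branch of each non-integer power on $\setC \setminus ((-\infty,0] \cup [1,+\infty))$, which makes the fractional powers of $1 \mp z$ well-defined and smooth on $(0,1)$ with non-zero limits at the relevant endpoint. After that, the dominance of one term over the others is immediate from comparing exponents.
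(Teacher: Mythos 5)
Your proof is correct and follows essentially the same route as the paper: factor out the singular power at each endpoint and observe that the remaining factor ($g$ or $h$, the paper's $k_0$ or $k_1$) is analytic and non-vanishing there. The only cosmetic difference is that the paper differentiates the power series of the analytic factor term by term, whereas you apply the product rule and compare exponents directly; both yield the same conclusion.
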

\begin{proof}
We may write
\[ k(z) = z^{1/4} k_0(z)\]
where
\begin{equation}\label{eq:knought}
k_0(z) = (1-z)^{-1/2\varepsilon-1/4} (1+z)^{1/2\varepsilon-1/4}
\end{equation}
is analytic in a neighbourhood of $0$. The function $k_0$ has a power series expansion
\[ k_0(z) = \sum_{n=0}^{\infty} a_n z^n \]
valid in some disc $D(0;R_0)$. Putting $z = 0$ in \eqnref{eq:knought}, we see that $a_0 = 1$. We thus have
\begin{eqnarray*}
k(z)		& = & z^{1/4} + \sum_{n=1}^{\infty} a_n z^{n+1/4}\\
k'(z)		& = & \frac{1}{4}z^{-3/4} + \sum_{n=1}^{\infty} \left(n+\frac{1}{4}\right) a_n z^{n-3/4}\\
k''(z)	& = & -\frac{3}{16}z^{-7/4} + \sum_{n=1}^{\infty} \left(n+\frac{1}{4}\right)\left(n-\frac{3}{4}\right) a_n z^{n-7/4}
\end{eqnarray*}
in the cut disc $D(0;R_0) \setminus (-R_0,0]$, and the stated asymptotics as $z \to 0$ follow.

Similarly, by writing
\[ k(z) = (1-z)^{-1/2\varepsilon-1/4} k_1(z) \]
where
\[ k_1(z) = z^{1/4} (1+z)^{1/2\varepsilon-1/4} \]
is analytic in a neighbourhood of $1$, we obtain the stated asymptotics as $z \to 1$.
\qed
\end{proof}

\begin{lemma}\label{lem:phiasym}
Asymptotically,
\begin{eqnarray*}
\phi(s)		& \sim & 2^{-2} s^2\\
\phi'(s)	& \sim &	 2^{-1}s\\
\phi''(s)	& \sim &	2^{-1}
\end{eqnarray*}
as $s \to 0+$ and
\begin{eqnarray*}
1-\phi(s)	& \sim & 2^{-1} (\beta - s)^2\\
\phi'(s)	& \sim &	\beta - s\\
\phi''(s)	& \sim &	-1
\end{eqnarray*}
as $s \to \beta-$.
\end{lemma}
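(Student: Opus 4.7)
The plan is to reduce the entire lemma to elementary asymptotic analysis of one simple integrand. Using the explicit formulas for $w$ and $p$, one computes that $w(y)/p(y) = y^{-1}(1-y)^{-1}(1+y)^{-1}$, so
\[\psi'(t) = \frac{1}{\sqrt{t(1-t)(1+t)}}.\]
This factored form will drive all the asymptotics, and is far more convenient than working with $w$ and $p$ separately.

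For the behaviour of $\phi$ itself, near $t=0$ I would write $\psi'(t) = t^{-1/2}(1+O(t))$ and integrate from $0$ to $t$ to obtain $\psi(t) = 2t^{1/2}(1+o(1))$; inverting yields $\phi(s) \sim 2^{-2}s^2$ as $s \to 0+$. Near $t=1$, the same procedure with $\psi'(t) = 2^{-1/2}(1-t)^{-1/2}(1+O(1-t))$ gives $\beta - \psi(t) \sim 2^{1/2}(1-t)^{1/2}$, which inverts to $1-\phi(s) \sim 2^{-1}(\beta-s)^2$ as $s \to \beta-$. The asymptotic for $\phi'$ at each endpoint then follows immediately from the inverse function theorem identity
\[\phi'(s) = \frac{1}{\psi'(\phi(s))} = \sqrt{\phi(s)\bigl(1-\phi(s)\bigr)\bigl(1+\phi(s)\bigr)},\]
by substituting the asymptotics for $\phi(s)$ already obtained.

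For $\phi''$, rather than differentiate $\psi'$ a second time and then translate back through the inverse function theorem, the trick is to square the previous identity to obtain the exact relation $\phi'(s)^2 = \phi(s) - \phi(s)^3$ on all of $(0,\beta)$. Differentiating this and dividing by $2\phi'(s)$ yields the exact pointwise formula
\[\phi''(s) = \frac{1 - 3\phi(s)^2}{2}.\]
Since $\phi(s) \to 0$ as $s \to 0+$ and $\phi(s) \to 1$ as $s \to \beta-$, the limits $\phi''(s) \to 2^{-1}$ and $\phi''(s) \to -1$ follow at once.

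The only real obstacle is bookkeeping with the $2^{\pm 1/2}$ factors from the $(1+y)^{-1/2}$ contribution at each endpoint. The genuinely useful step is the exact identity for $\phi''$: this sidesteps the standard pitfall that the derivative of an asymptotic expansion need not be the asymptotic expansion of the derivative.
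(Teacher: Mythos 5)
Your proposal is correct and follows essentially the same route as the paper: compute the endpoint asymptotics of $\psi$ from the explicit integrand $y^{-1/2}(1-y)^{-1/2}(1+y)^{-1/2}$, invert to get $\phi$, and use $\phi'(s)=\sqrt{\phi(s)(1-\phi(s))(1+\phi(s))}$ for the first derivative. Your only departure is the exact identity $\phi''(s)=\tfrac{1}{2}\left(1-3\phi(s)^2\right)$, obtained by differentiating $\phi'^2=\phi-\phi^3$, which is a tidy simplification of the paper's step of differentiating the square-root expression and substituting the known asymptotics; both yield the same limits.
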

\begin{proof}
The inverse $\psi$ of $\phi$ is given by
\[\psi(t) = \int_0^t w(y)^{1/2}p(y)^{-1/2} \mathrm{d}y \]
for all $t \in [0,1]$. Since
\[ w(y)^{1/2}p(y)^{-1/2} = y^{-1/2}(1-y)^{-1/2}(1+y)^{-1/2} \]
for all $y \in (0,1)$,
\[ \psi(t) \sim 2 t^{1/2} \]
as $t \to 0+$ and
\[ \beta-\psi(t) = \int_t^1 y^{-1/2}(1-y)^{-1/2}(1+y)^{-1/2} \mathrm{d}y \sim 2^{1/2}(1-t)^{1/2} \]
as $t \to 1-$. Putting $t = \phi(s)$, we obtain the stated asymptotics for $\phi$.

The stated asymptotics for $\phi'$ follow immediately from the asymptotics of $\phi$ and the fact that
\begin{eqnarray}
\phi'(s) 	& = & \frac{1}{\psi'(\phi(s))}\\
					& = & p(\phi(s))^{1/2}w(\phi(s))^{-1/2}\\
					& = & \phi(s)^{1/2}(1-\phi(s))^{1/2}(1+\phi(s))^{1/2} \label{eq:phidash}
\end{eqnarray}
for all $s \in (0,\beta)$. Differentiating \eqnref{eq:phidash} and using the asymptotics we have calculated for $\phi$ and $\phi'$, we obtain the stated asymptotics for $\phi''$.
\qed
\end{proof}

\begin{corollary}\label{cor:casym}
Asymptotically,
\begin{eqnarray*}
c(s)	& \sim &	2^{-1/2}s^{1/2}\\
c'(s)	& \sim &	2^{-3/2}s^{-1/2}\\
c''(s)& \sim &	-2^{-5/2}s^{-3/2}
\end{eqnarray*}
as $s \to 0+$ and
\begin{eqnarray*}
c(s)	& \sim & 2^{1/\varepsilon} (\beta-s)^{-1/\varepsilon-1/2}\\
c'(s) & \sim & \left(\frac{1}{\varepsilon}+\frac{1}{2}\right)2^{1/\varepsilon} (\beta-s)^{-1/\varepsilon-3/2}\\
c''(s)& \sim & \left(\frac{1}{\varepsilon}+\frac{1}{2}\right)\left(\frac{1}{\varepsilon}+\frac{3}{2}\right)2^{1/\varepsilon} (\beta-s)^{-1/\varepsilon-5/2}
\end{eqnarray*}
as $s \to \beta-$.
\end{corollary}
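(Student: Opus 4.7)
Since $c$ is the composition $k \circ \phi$, equations \eqnref{eq:c}--\eqnref{eq:cddash} express $c$, $c'$ and $c''$ as products of quantities already controlled by Lemmas \ref{lem:kasym} and \ref{lem:phiasym}. My plan is therefore to substitute the relevant one-sided asymptotics of $k, k', k''$ and of $\phi, \phi', \phi''$ into these three chain-rule formulas and simplify, treating the two endpoints separately.

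At $s = 0+$, I use $\phi(s) \sim 2^{-2} s^2 \to 0$ to justify applying the $z \to 0$ asymptotics of $k^{(j)}$ at $z = \phi(s)$. Plugging $k(\phi(s)) \sim \phi(s)^{1/4}$ into \eqnref{eq:c} yields $c(s) \sim 2^{-1/2} s^{1/2}$ at once. For $c'$, \eqnref{eq:cdash} combined with $k'(\phi(s)) \sim (1/4)\phi(s)^{-3/4}$ and $\phi'(s) \sim 2^{-1} s$ produces the constant $(1/4)\cdot 2^{3/2}\cdot 2^{-1} = 2^{-3/2}$. For $c''$, both summands in \eqnref{eq:cddash} happen to be of order $s^{-3/2}$, and the elementary identity $-(3/16)\cdot 2^{3/2} + (1/4)\cdot 2^{1/2} = -2^{-5/2}$ supplies the claimed coefficient.

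At $s = \beta-$, I use $1 - \phi(s) \sim 2^{-1}(\beta - s)^2 \to 0$ to apply the $z \to 1$ asymptotics with $1 - z$ replaced by $1 - \phi(s)$. The computation for $c$ is immediate once one observes the telescoping $2^{1/(2\varepsilon) - 1/4} \cdot 2^{1/(2\varepsilon) + 1/4} = 2^{1/\varepsilon}$. An additional factor $\phi'(s) \sim \beta - s$ plus the rewriting $1/(2\varepsilon) + 1/4 = (1/2)(1/\varepsilon + 1/2)$ then brings $c'$ into the stated form.

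The only genuine subtlety lies in $c''(s)$ as $s \to \beta-$. Here $k''(\phi(s))\phi'(s)^2$ and $k'(\phi(s))\phi''(s)$ are both of order $(\beta - s)^{-1/\varepsilon - 5/2}$: the stronger blow-up of $k''$ over $k'$ (an extra factor $(1-\phi(s))^{-1}\sim (\beta-s)^{-2}$) is exactly offset by the stronger vanishing of $\phi'^2$ versus $\phi''$ (an extra factor $(\beta-s)^2$), so neither summand may be dropped. Retaining both, factoring $1/(2\varepsilon) + 1/4 = (1/2)(1/\varepsilon + 1/2)$ and $1/(2\varepsilon) + 5/4 = (1/2)(1/\varepsilon + 5/2)$, and using the algebraic identity $(1/\varepsilon + 5/2) - 1 = 1/\varepsilon + 3/2$ to merge their coefficients produces the advertised $(1/\varepsilon + 1/2)(1/\varepsilon + 3/2)\,2^{1/\varepsilon}(\beta - s)^{-1/\varepsilon - 5/2}$. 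Apart from noticing this cancellation, the entire proof is mechanical substitution.
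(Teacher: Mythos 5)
Your proposal is correct and follows exactly the route the paper intends: substitute the one-sided asymptotics of $k,k',k''$ (Lemma \ref{lem:kasym}) and $\phi,\phi',\phi''$ (Lemma \ref{lem:phiasym}) into the chain-rule formulas \eqnref{eq:c}--\eqnref{eq:cddash}, the paper simply declaring the result ``immediate'' where you carry out the arithmetic. All of your constants check out, and you correctly identify the one point requiring care, namely that both summands in \eqnref{eq:cddash} contribute at leading order at each endpoint with a nonzero combined coefficient.
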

\begin{proof}
This follows immediately from equations \eqnref{eq:c}, \eqnref{eq:cdash}, \eqnref{eq:cddash} and Lemmas \ref{lem:kasym} and \ref{lem:phiasym}. \qed
\end{proof}

\begin{corollary}\label{cor:pasym}
Asymptotically,
\begin{eqnarray*}
p(\phi(s))	& \sim & 	1\\
p'(\phi(s))	& \sim &	-\frac{2}{\varepsilon}
\end{eqnarray*}
as $s \to 0+$ and
\begin{eqnarray*}
p(\phi(s))	& \sim & 	2^{-2/\varepsilon} (\beta-s)^{2+2/\varepsilon} \\
p'(\phi(s))	& \sim &	-\left(1+\frac{1}{\varepsilon}\right) 2^{1-2/\varepsilon} (\beta-s)^{2/\varepsilon}
\end{eqnarray*}
as $s \to \beta-$.
\end{corollary}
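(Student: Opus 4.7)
The plan is to work directly from the explicit formula $p(x) = (1-x)^{1+1/\varepsilon}(x+1)^{1-1/\varepsilon}$ and feed in the asymptotics of $\phi$ established in Lemma \ref{lem:phiasym}; unlike the analysis of $c$, there is no need to route through a separate analytic factorisation as was done for $k$ in Lemma \ref{lem:kasym}, since $p$ is already a product of powers of $1-x$ and $1+x$. My first step would be to differentiate $p$ by the product rule, obtaining
\[
p'(x) = -\left(1+\tfrac{1}{\varepsilon}\right)(1-x)^{1/\varepsilon}(x+1)^{1-1/\varepsilon} + \left(1-\tfrac{1}{\varepsilon}\right)(1-x)^{1+1/\varepsilon}(x+1)^{-1/\varepsilon}.
\]

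For the endpoint $s \to 0+$, the claimed asymptotics are nonzero limits, so mere continuity is enough: since $\phi(s) \to 0$ and $p$, $p'$ are continuous at $x = 0$, I would simply evaluate $p(0) = 1$ and $p'(0) = -(1+1/\varepsilon) + (1-1/\varepsilon) = -2/\varepsilon$. The detailed asymptotics of $\phi$ at this endpoint are not needed beyond $\phi(0) = 0$.

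For the endpoint $s \to \beta-$, the only inputs I need from Lemma \ref{lem:phiasym} are $1-\phi(s) \sim 2^{-1}(\beta-s)^2$ and $1+\phi(s) \to 2$. Substituting these into $p(\phi(s))$ and collecting the powers of $2$ immediately yields $2^{-2/\varepsilon}(\beta-s)^{2+2/\varepsilon}$. For $p'(\phi(s))$ I would treat the two summands separately: the first, carrying $(1-\phi(s))^{1/\varepsilon}$, is of order $(\beta-s)^{2/\varepsilon}$, while the second, carrying the higher power $(1-\phi(s))^{1+1/\varepsilon}$, is of order $(\beta-s)^{2+2/\varepsilon}$ and is therefore negligible; extracting the leading contribution produces the stated formula.

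The whole argument is essentially routine, and I anticipate no real obstacle; the only place a slip could occur is in the bookkeeping of the exponents of $2$ and in verifying that the second summand of $p'$ really is subleading compared to the first, which holds because $(\beta-s)^{2+2/\varepsilon}/(\beta-s)^{2/\varepsilon} = (\beta-s)^2 \to 0$ as $s \to \beta-$.
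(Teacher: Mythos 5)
Your proposal is correct and follows essentially the same route as the paper: establish the asymptotics of $p$ and $p'$ at the endpoints $x=0$ and $x=1$ and then compose with the asymptotics of $\phi$ from Lemma \ref{lem:phiasym}. Your version is in fact slightly more careful than the paper's, which records the intermediate asymptotic $p'(x)\sim -(1+\tfrac{1}{\varepsilon})2^{1-1/\varepsilon}(1-x)^{1+1/\varepsilon}$ as $x\to 1$ with an evidently mistyped exponent (it should be $(1-x)^{1/\varepsilon}$, as your explicit identification of the dominant summand of $p'$ shows), though the paper's final stated result agrees with yours.
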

\begin{proof}
The asymptotics of $p$ and $p'$ are:
\begin{eqnarray*}
p(x)	& \sim & 1\\
p'(x)	& \sim & -\frac{2}{\varepsilon}
\end{eqnarray*}
as $x \to 0$ and
\begin{eqnarray*}
p(x)	& \sim & 2^{1-1/\varepsilon} (1-x)^{1+1/\varepsilon}\\
p'(x)	& \sim & -\left(1+\frac{1}{\varepsilon}\right) 2^{1-1/\varepsilon} (1-x)^{1+1/\varepsilon}
\end{eqnarray*}
as $x \to 1$. The stated asymptotics follow from this and Lemma \ref{lem:phiasym}. \qed
\end{proof}

\begin{theorem}
Asymptotically,
\[ V(s) \sim \frac{3}{4} s^{-2} \]
as $s \to 0+$ and
\[ V(s) \sim \left(\frac{1}{\varepsilon^2}-\frac{1}{4}\right) (\beta-s)^{-2} \]
as $s \to \beta-$.
\end{theorem}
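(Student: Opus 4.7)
The proof is essentially a careful bookkeeping exercise: we substitute the asymptotic expansions from Corollaries \ref{cor:casym} and \ref{cor:pasym} together with Lemma \ref{lem:phiasym} into the formula \eqnref{eq:v} for $V(s)$, and verify that after cancellations the leading singularities are as claimed. The formula has the form $V = T_1 + T_2$ where
\[ T_1(s) = -c(s)c''(s)\frac{p(\phi(s))}{\phi'(s)}, \qquad T_2(s) = -c(s)c'(s)\frac{\phi'(s)^2p'(\phi(s))-p(\phi(s))\phi''(s)}{\phi'(s)^2}, \]
so the plan is to analyse each term at each endpoint separately and add.

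At $s\to 0+$ the leading behaviour of the ingredients gives $cc''\sim -2^{-3}s^{-1}$ and $p(\phi)/\phi'\sim 2s^{-1}$, so $T_1(s)\sim \frac{1}{4}s^{-2}$. For $T_2$ the key observation is that within the bracket $\phi'(s)^2 p'(\phi(s))\sim -s^2/(2\varepsilon)$ is dominated by $p(\phi(s))\phi''(s)\sim \tfrac{1}{2}$, so the bracket tends to $-\tfrac{1}{2}$ and $[\textrm{bracket}]/\phi'(s)^2\sim -2s^{-2}$. Since $cc'\sim \frac{1}{4}$, this gives $T_2(s)\sim \tfrac{1}{2}s^{-2}$. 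Adding yields the desired $\tfrac{3}{4}s^{-2}$.

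At $s\to\beta-$ the powers of $(\beta-s)$ must be tracked carefully. One checks $cc''\sim (\tfrac{1}{\varepsilon}+\tfrac{1}{2})(\tfrac{1}{\varepsilon}+\tfrac{3}{2})2^{2/\varepsilon}(\beta-s)^{-2/\varepsilon-3}$, and $p(\phi)/\phi'\sim 2^{-2/\varepsilon}(\beta-s)^{1+2/\varepsilon}$, so $T_1(s)\sim -(\tfrac{1}{\varepsilon}+\tfrac{1}{2})(\tfrac{1}{\varepsilon}+\tfrac{3}{2})(\beta-s)^{-2}$. For $T_2$ both summands in the bracket are of the same order $(\beta-s)^{2+2/\varepsilon}$ and combine to $-(1+\tfrac{2}{\varepsilon})2^{-2/\varepsilon}(\beta-s)^{2+2/\varepsilon}$; dividing by $\phi'(s)^2\sim(\beta-s)^2$ and multiplying by $-cc'\sim -(\tfrac{1}{\varepsilon}+\tfrac{1}{2})2^{2/\varepsilon}(\beta-s)^{-2/\varepsilon-2}$ gives $T_2(s)\sim (\tfrac{1}{\varepsilon}+\tfrac{1}{2})(1+\tfrac{2}{\varepsilon})(\beta-s)^{-2}$. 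The telescoping sum of coefficients gives
\[ -\Bigl(\tfrac{1}{\varepsilon^2}+\tfrac{2}{\varepsilon}+\tfrac{3}{4}\Bigr) + \Bigl(\tfrac{2}{\varepsilon^2}+\tfrac{2}{\varepsilon}+\tfrac{1}{2}\Bigr) = \tfrac{1}{\varepsilon^2}-\tfrac{1}{4}, \]
matching the claim.

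The main obstacle is purely bookkeeping: at both endpoints the two terms $T_1$ and $T_2$ are individually more singular than needed (near $0$ the powers of $s$ match up trivially, but near $\beta$ both are $(\beta-s)^{-2}$ with explicit $\varepsilon$-dependent coefficients), and one must carry the constants carefully to see the cancellation of the $2/\varepsilon$ terms and the combination $1/\varepsilon^2-1/4$. Since all the asymptotics being combined are genuine leading-order equivalences (not mere bounds), and the combinations above do not vanish, writing $\sim$ throughout and invoking the obvious algebraic rules for $\sim$ is justified at each step.
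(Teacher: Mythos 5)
Your proposal is correct and follows exactly the route the paper intends: substituting the asymptotics of Lemma \ref{lem:phiasym} and Corollaries \ref{cor:casym} and \ref{cor:pasym} into \eqnref{eq:v} term by term (the paper's own proof is a one-line citation of precisely these ingredients, leaving the bookkeeping to the reader). Your arithmetic checks out at both endpoints, including the correct handling of the two same-order summands in the bracket near $s=\beta$ and the observation that none of the combined leading coefficients vanish for $\varepsilon\in(0,2)$, which is what licenses the use of $\sim$ throughout.
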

\begin{proof}
This follows from \eqnref{eq:v}, Lemma \ref{lem:phiasym} and Corollaries \ref{cor:casym} and \ref{cor:pasym}.
\qed
\end{proof}

\bibliographystyle{unsrt}
\bibliography{D:/John/LaTeX/References}

\end{document}